\journal{Journal of Mathematical Analysis and its Applications}
\newcommand{\E}{\mathbf E}
\newcommand{\R}{\mathbb R}
\newcommand{\eps}{\varepsilon}
\newcommand{\Sphere}{{\mathbb S}^{d-1}}
\newcommand{\symdif}{\bigtriangleup}
\newcommand{\thf}{\frac{1}{2}}
\newcommand{\ind}{\mathds{1}}
\DeclareMathOperator{\Vol}{V}
\DeclareMathOperator{\pr}{pr}
\theoremstyle{plain} \newtheorem{theorem}{Theorem}[section]
\theoremstyle{plain} \newtheorem{proposition}[theorem]{Proposition}
\theoremstyle{plain} \newtheorem{lemma}[theorem]{Lemma}
\theoremstyle{plain} \newtheorem{corollary}[theorem]{Corollary}
\theoremstyle{definition} \newtheorem{definition}[theorem]{Definition}
\theoremstyle{definition} 
\theoremstyle{remark} 
\theoremstyle{remark} \newtheorem{example}[theorem]{Example}
\newlength{\querylen}
\begin{document}

\begin{frontmatter}

\title{A generalisation of the fractional Brownian field based on non-Euclidean norms}

\author[mainaddress]{Ilya Molchanov}
\ead{ilya.molchanov@stat.unibe.ch}

\author[secondaryaddress]{Kostiantyn Ralchenko}
\ead{k.ralchenko@gmail.com}

\address[mainaddress]{University of Bern,
 Institute of Mathematical Statistics and Actuarial Science,
 Sidlerstrasse 5,
 CH-3012 Bern,
 Switzerland}

\address[secondaryaddress]
{Taras Shevchenko National University of Kyiv,
 Department of Probability Theory, Statistics and Actuarial Mathematics,
 Volodymyrska 64/13,
 01601 Kyiv,
 Ukraine}

\begin{abstract}
  We explore a generalisation of the L\'evy fractional Brownian field
  on the Euclidean space based on replacing the Euclidean norm with
  another norm. A characterisation result for admissible norms yields
  a complete description of all self-similar Gaussian random fields
  with stationary increments. Several integral representations of the
  introduced random fields are derived.

  In a similar vein, several non-Euclidean variants of the fractional
  Poisson field are introduced and it is shown that they share the
  covariance structure with the fractional Brownian field and converge
  to it.  The shape parameters of the Poisson and Brownian variants
  are related by convex geometry transforms, namely the radial $p$th
  mean body and the polar projection transforms.
\end{abstract}

\begin{keyword}
fractional Brownian field \sep fractional Poisson field \sep radial
$p$-th mean body \sep polar projection body \sep Minkowski space \sep
star body \sep norm
\MSC[2010] 60G22 \sep 60G55 \sep 60G60 \sep 52A21
\end{keyword}

\end{frontmatter}

\section{Introduction}
\label{sec:introduction}

The \emph{multiparameter fractional Brownian motion} or the
\emph{L\'evy fractional Brownian field} (fBf) with Hurst index
$H\in(0,1)$ is a centred Gaussian random field $X(z)$, $z\in\R^d$,
with the covariance function
\begin{equation}
  \label{eq:1}
  \E[X(z_1)X(z_2)]=\thf\left[\|z_1\|^{2H}+\|z_2\|^{2H}-\|z_1-z_2\|^{2H}\right]\,,
\end{equation}
where $\|z\|$ is the Euclidean norm of $z$.  If $h=\thf$, this yields
the L\'evy Brownian motion on $\R^d$.  If $d=1$, one recovers the
classical univariate fractional Brownian motion (fBm), see
\cite{MandelbrotVanNess68}.  This random field was introduced by
A.M.~Yaglom \cite{yag57} as a model of turbulence in fluid
mechanics. Various proofs showing that \eqref{eq:1} defines a valid
covariance function are given in
\cite{Gangolli67,MandelbrotVanNess68,oss:way89,sam:taq94}.  Further
results including series expansions and a general functional limit
theorem can be found in \cite{Malyarenko13}.  The two most important
integral representations of the L\'evy fBf are the moving average
representation using the integral with respect to the white noise and
the harmonisable representation as an integral with respect to the
Fourier transform of the white noise, see
\cite{coh:ist13,herb06,lind93,sam:taq94}.

Istas \cite{ist05} defined the fractional Brownian motion $B$ on a
metric space by assuming that the square of its increment $B(x)-B(y)$
is normally distributed with the variance given by the $2H$-power of
the metric distance between $x$ and $y$. The existence of fractional
Brownian motions on the Euclidean sphere and on the hyperbolic space
is verified for $H\in(0,\thf]$. These constructions have been extended
to stable random fields in \cite{ist06}. See also the recent monograph
\cite{coh:ist13} for a number of results on general self-similar
random fields.

Bierm\'e et al. \cite{bier:dem:est13} considered a random field
generated by a Poisson random measure on $\R^d\times[0,\infty)$ and
proved that it shares the same covariance function \eqref{eq:1} with
the L\'evy fBf. Such a field may be called a
fractional Poisson field, noticing that other definitions of
fractional Poisson fields are available in the literature, see
e.\,g.\ \cite{meer:nan:vel11} for the univariate case and
\cite{leon:mer13} for a multivariate generalisation.

In this paper we introduce a generalisation of the fBf based on
replacing the Euclidean norm in \eqref{eq:1} with a non-Euclidean
one. The space $\R^d$ with such norm is called the Minkowski space
\cite{thom96}, so that we term our generalisation the Minkowski
fractional Brownian field (MfBf). Section~\ref{sec:norms-star-bodies}
introduces necessary concepts from convex geometry. In
Section~\ref{sec:gener-fract-brown} we establish that the norms giving
rise to valid covariance functions are generated by $L_p$-balls
related to the isometric embeddability of the Minkowski space into
$L_p([0,1])$ for $p=2H$. Furthermore, we derive several integral
representations of the introduced random field. In addition to
conventional integral representations based on integrating the white
noise or its Fourier transform, we derive novel representations based
on sums of series of L\'evy fBf's and integrals of univariate
fractional Brownian motions. Furthermore, we relate the ordering of
expected supremum of MfBf with the Banach--Mazur distance between
normed spaces. The key idea is the equivalence relation on the family
of MfBf up to non-degenerate linear transformations of their
arguments.

Section~\ref{sec:fract-poiss-fields} introduces random fields based on
Poisson point processes that share the covariance function with the
MfBf for $H\in(0,\thf)$. The construction follows the ideas from
\cite{bier:dem:est13} and \cite{wan:wen03}, and is also related to
random balls models \cite{bret:dom09} and the studies of micropulses
\cite{mar11}. In difference to the previous works, we emphasise the
role of the shape parameters of the corresponding fields. The main
result provides a relationship between the shape parameters of the
Poisson random field and its Gaussian counterpart. This relationship
is given by the radial $p$th mean body transformation introduced in
\cite{gar:zhan98}. The convergence to the Brownian field with $H=\thf$
using different normalisations of the Poisson model is considered in
Section~\ref{sec:conv-mfbf-with}. These results are new even in the
case of L\'evy fBf. The shape parameters are related by the polar
projection body transform known from the convex geometry
\cite{schn2}. Finally Section~\ref{sec:other-constr-fract} presents
other constructions of the fractional Poisson fields that share the
covariance structure with the MfBf.

\section{Norms and star bodies}
\label{sec:norms-star-bodies}

A closed bounded set $F$ in $\R^d$ is called a \emph{star body} if for
every $u\in F$ the interval $\{tu:\; 0\leq t<1\}$ is contained in the
interior of $F$ and the \emph{Minkowski functional} (or the gauge
function) of $F$ defined by
\begin{displaymath}
  \|u\|_F=\inf\{s\geq0:\; u\in sF\}
\end{displaymath}
is a continuous function of $u\in\R^d$.   The set $F$ can be
recovered from its Minkowski functional by
\begin{displaymath}
  F=\{u:\; \|u\|_F\leq 1\}\,,
\end{displaymath}
while the \emph{radial function}
\begin{math}
  \rho_F(u)=\|u\|_F^{-1}
\end{math}
provides the polar coordinate representation of the boundary of $F$
for $u$ from the unit \emph{Euclidean sphere} $\Sphere$.  In the
following we mainly consider origin-symmetric star-shaped sets and call them
\emph{centred} in this case. If the star body $F$ is centred and
\emph{convex}, then $\|u\|_F$ becomes a convex norm on $\R^d$ and
$(\R^d,\|\cdot\|_F)$ is called a \emph{Minkowski space}, see
\cite{thom96}. We also keep the same notation $\|u\|_F$ if the norm is
not convex.  Further $\|x\|$ (without subscript) denotes the
\emph{Euclidean norm} of $x\in\R^d$. By $\Vol_d(K)$ we denote the
$d$-dimensional \emph{Lebesgue measure} of a measurable set $K$.

The \emph{$p$-sum} of two star bodies $F_1$ and $F_2$ is the star body
$F$ such that
\begin{displaymath}
  \|u\|_F^p=\|u\|_{F_1}^p+\|u\|_{F_2}^p\,,
\end{displaymath}
see \cite{kold05}. 

The \emph{support function} of a convex set $K$ in $\R^d$ is defined
by
\begin{displaymath}
  h(K,u)=\sup\{\langle x,u\rangle:\; x\in K\}\,,\quad u\in\R^d\,.
\end{displaymath}
Note that the support function may take infinite values if $K$ is not
bounded.  The \emph{polar set} to a convex set $K$ containing the
origin is defined by
\begin{displaymath}
  K^*=\{u:\; h(K,u)\leq1\}\,.
\end{displaymath}

An \emph{$L_p$-ball} for $p>0$ is a star body $F$ such that
$(\R^d,\|\cdot\|_F)$ is isometrically embeddable into $L_p([0,1])$,
see \cite[Lemma~6.4]{kold05}. The star body $F$ is an $L_p$-ball if
and only if
\begin{equation}
  \label{eq:spectral}
  \|z\|_F^p=\int_{\Sphere} |\langle z,u\rangle|^p \sigma(du)
\end{equation}
for a finite even measure $\sigma$ on the unit Euclidean sphere
$\Sphere$, see \cite[Lemma~4.8]{grin:zhan99}. 
The $L_p$-balls are necessarily convex for $p\in[1,2]$.

\begin{example}
  \label{ex:balls}
  A set $F$ is an $L_2$-ball if and only if
  \begin{displaymath}
    \|z\|_F^2=\int_{\Sphere} |\langle z,u\rangle|^2 \sigma(du)
    =\langle Az,z\rangle\,,
  \end{displaymath}
  where the matrix $A$ is symmetric non-negative definite with entries
  given by
  \begin{displaymath}
    a_{ij}=\int_{\Sphere} u_iu_j \sigma(du)\,, \quad i,j=1,\dots,d\,.
  \end{displaymath}
  Thus, the family of $L_2$-balls coincides with the family of
  ellipsoids.
\end{example}

\section{Minkowski fractional Brownian field}
\label{sec:gener-fract-brown}

\subsection{Definition and existence}
\label{sec:definition}

\begin{definition}
  \label{def:f-fbm}
  A centred Gaussian random field $X_F(z)$, $z\in\R^d$, with the
  covariance function $C_F(z_1,z_2)=\E[X_F(z_1)X_F(z_2)]$ given by
  \begin{equation}
    \label{eq:cov-F}
    C_F(z_1,z_2)=\thf
    \left[\|z_1\|_F^{2H}+\|z_2\|_F^{2H}-\|z_1-z_2\|_F^{2H}\right],
    \qquad z_1,z_2\in\R^d\,,
  \end{equation}
  is called the \emph{Minkowski fractional Brownian field} (MfBf) with
  the Hurst parameter $H\in(0,1]$ and the \emph{associated star body}
  $F$.
\end{definition}

Since in dimension $d=1$ all Minkowski functionals are identical up to
a constant, a non-trivial generalisation is only possible if the
dimension $d$ is at least $2$.

The increment of the MfBf is centred Gaussian with the variance
\begin{displaymath}
  \E(X_F(z_1)-X_F(z_2))^2=\|z_1-z_2\|_F^{2H}\,.
\end{displaymath}
Thus, the MfBf is a Gaussian field with stationary increments.
It follows from \cite{dob79} that the family of MfBf coincides with
the family of all Gaussian random fields with stationary increments.

The following well-known result is useful to establish the positive
definiteness of the covariance \eqref{eq:cov-F}. Note that the
positive definiteness is understood in the non-strict sense.

\begin{lemma}
  \label{lemma:lem1}
  Let $f:\R^d\to\R_+$ be an even function. The function $e^{-cf(z)}$,
  $z\in\R^d$, is positive definite for all $c>0$ if and only if the
  function
  \[
  A_f(z_1,z_2)=f(z_1)+f(z_2)-f(z_1-z_2)
  \]
  is positive definite.
\end{lemma}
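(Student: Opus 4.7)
The plan is to prove both directions using standard positive-definite preserving operations (Schur product, exponential, rank-one kernels), with a Taylor expansion at $c=0^{+}$ as the key input for the harder direction.

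For the implication $A_f$ positive definite $\Rightarrow$ $e^{-cf}$ positive definite, I would start from the algebraic identity $f(z_1-z_2) = f(z_1) + f(z_2) - A_f(z_1,z_2)$, which yields
\[
e^{-cf(z_1-z_2)} = e^{-cf(z_1)}\, e^{-cf(z_2)}\, e^{c A_f(z_1,z_2)}.
\]
The rank-one kernel $e^{-cf(z_1)} e^{-cf(z_2)}$ is trivially positive definite. The kernel $e^{cA_f}$ is positive definite as well, since $cA_f$ is for $c>0$ and the Schur product theorem applied termwise to the power series $e^{K} = \sum_{n\ge 0} K^n/n!$ shows that the exponential of a positive definite kernel is positive definite. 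A final Schur product of the two factors completes this direction.

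For the converse I use the expansion $e^{-cx} = 1 - cx + O(c^2)$ as $c \downarrow 0$. Given points $z_0,\dots,z_n$ and coefficients $b_0,\dots,b_n$ with $\sum_i b_i = 0$, positive definiteness of $e^{-cf}$ yields
\[
0 \le \sum_{i,j} b_i \bar b_j\, e^{-cf(z_i-z_j)} = -c\sum_{i,j} b_i \bar b_j\, f(z_i-z_j) + O(c^2),
\]
the zeroth-order term $|\sum_i b_i|^2$ having vanished. Dividing by $c$ and letting $c \downarrow 0$ gives the conditional inequality $\sum_{i,j} b_i \bar b_j f(z_i-z_j) \le 0$ whenever $\sum_i b_i = 0$. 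To upgrade this to positive definiteness of $A_f$, given arbitrary $a_1,\dots,a_n$ at $z_1,\dots,z_n$, I would adjoin the origin $z_0 = 0$ with coefficient $a_0 = -\sum_{i\ge 1} a_i$, making $\sum_{i=0}^n a_i = 0$. Applying the conditional inequality to this augmented system and using evenness of $f$ (so $f(-z_i) = f(z_i)$), a short regrouping of the cross terms produces
\[
\sum_{i,j \ge 1} a_i \bar a_j\, A_f(z_i,z_j) \ge \Bigl|\sum_{i\ge 1} a_i\Bigr|^2 f(0) \ge 0,
\]
where the last inequality uses $f \ge 0$.

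The only delicate point is the converse: the Taylor argument only produces useful information when the coefficients sum to zero, so the bootstrap through an auxiliary origin point is essential. Evenness of $f$ is used precisely there, to recognise $f(-z_i)$ as $f(z_i)$ when pairing an index with the $0$-index, and non-negativity of $f$ ensures that the leftover term $|\sum_i a_i|^2 f(0)$ has the correct sign.
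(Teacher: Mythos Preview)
Your proof is correct and rests on the same key device as the paper's: adjoining the origin with coefficient $a_0=-\sum_{i\ge1}a_i$ to pass between positive definiteness of $A_f$ and conditional negative definiteness of $f$. The only difference is that the paper packages the equivalence ``$f$ negative definite $\Leftrightarrow$ $e^{-cf}$ positive definite for all $c>0$'' as a citation to Schoenberg's theorem (Berg--Christensen--Ressel, Th.~2.2), whereas you reprove both halves of that theorem inline---the Schur-product factorisation $e^{-cf(z_1-z_2)}=e^{-cf(z_1)}e^{-cf(z_2)}e^{cA_f(z_1,z_2)}$ for the forward direction and the Taylor expansion at $c\downarrow0$ for the converse. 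Your version is thus more self-contained but otherwise follows the same route; it also makes explicit where the hypotheses $f\ge0$ and $f$ even enter (in bounding the residual $|\sum a_i|^2 f(0)$ and in identifying $f(-z_i)=f(z_i)$), which the paper's compressed identity leaves implicit.
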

\begin{proof}
  If $t_1,\dots,t_k\in\R$ and $z_1,\dots,z_k\in\R^d$, then
  \begin{displaymath}
    \sum_{i,j=1}^k A_f(z_i,z_j)t_it_j
    =-\sum_{i,j=0}^k f(z_i-z_j)t_it_j\,,
  \end{displaymath}
  where $t_0=-\sum_{i=1}^k t_i$ and $z_0=0$ is the origin. Thus, $A_f$
  is positive definite if and only if $f$ is negative definite. The
  latter is equivalent to the positive definiteness of $e^{-cf}$ for
  all $c>0$, see \cite[Th.~2.2]{ber:c:r}.
\end{proof}

\begin{proposition}
  \label{prop:fbm-exist}
  The MfBf exists if and only if $H\in(0,1]$ and $F$ is an $L_p$-ball
  with $p=2H$.
\end{proposition}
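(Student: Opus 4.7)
The positive definiteness of the covariance $C_F$ is the sole requirement for the centred Gaussian MfBf to exist. The plan is to apply Lemma~\ref{lemma:lem1} to $f(z)=\|z\|_F^{2H}$, which is even because $F$ must be centred, reducing positive definiteness of $C_F$ to positive definiteness of
\begin{displaymath}
  \phi_c(z):=\exp\bigl(-c\|z\|_F^{2H}\bigr)
\end{displaymath}
for every $c>0$. The question then becomes when such a one-parameter family consists of characteristic functions on $\R^d$, which is dictated by the theory of symmetric stable laws.

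For the ``if'' direction, suppose $F$ is an $L_{2H}$-ball with $2H\in(0,2]$. Representation~\eqref{eq:spectral} rewrites
\begin{displaymath}
  \phi_c(z)=\exp\Bigl(-c\int_{\Sphere}|\langle z,u\rangle|^{2H}\sigma(du)\Bigr).
\end{displaymath}
For each fixed $u\in\Sphere$, the map $z\mapsto\exp(-c|\langle z,u\rangle|^{2H})$ is the characteristic function of a symmetric $2H$-stable variable supported on $\R u$ (valid because $2H\le 2$), hence positive definite. Approximating the integral by Riemann-type sums exhibits $\phi_c$ as a pointwise limit of products of positive definite functions, so $\phi_c$ itself is positive definite and Lemma~\ref{lemma:lem1} closes this direction.

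For the converse, suppose the MfBf exists. Then $\phi_c$ is continuous, equals $1$ at the origin, and is positive definite for every $c>0$, so Bochner's theorem identifies it as the characteristic function of a symmetric probability measure $\mu_c$ on $\R^d$. The identity $\phi_c\phi_{c'}=\phi_{c+c'}$ makes $\{\mu_c\}_{c>0}$ a convolution semigroup, while the homogeneity $\phi_c(\lambda z)=\phi_{c\lambda^{2H}}(z)$ forces $\mu_1$ to be strictly $(2H)$-stable. For any $v$ with $\|v\|_F>0$, the scalar map $t\mapsto\phi_c(tv)=\exp(-c\|v\|_F^{2H}|t|^{2H})$ is a characteristic function on $\R$, and the classical fact that $e^{-c|t|^p}$ is positive definite only when $p\in(0,2]$ yields $H\in(0,1]$. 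The L\'evy--Khintchine classification of symmetric strictly $p$-stable distributions on $\R^d$ then produces
\begin{displaymath}
  \|z\|_F^{2H}=\int_{\Sphere}|\langle z,u\rangle|^{2H}\sigma(du)
\end{displaymath}
for some finite even measure $\sigma$ on $\Sphere$, which by \eqref{eq:spectral} is precisely the condition that $F$ be an $L_{2H}$-ball.

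The main obstacle is this final step of the converse: extracting a concrete spectral measure $\sigma$ from the bare positive definiteness of $\phi_c$ relies on the representation theorem for symmetric stable laws on $\R^d$, and it is at this step that the upper bound $H\le 1$ also materialises. The ``if'' direction is by comparison a routine assembly from \eqref{eq:spectral} and elementary properties of characteristic functions.
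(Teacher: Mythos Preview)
Your argument is correct and shares the same logical skeleton as the paper's proof: both invoke Lemma~\ref{lemma:lem1} to reduce existence of the MfBf to positive definiteness of $e^{-c\|z\|_F^{2H}}$ for all $c>0$, and both use a one-dimensional restriction to force $2H\in(0,2]$. The difference lies in how the key equivalence ``$e^{-c\|\cdot\|_F^{p}}$ positive definite $\Leftrightarrow$ $F$ is an $L_p$-ball'' is established. The paper simply quotes this from Koldobsky's monograph (his Theorem~6.6), whereas you unpack it through the theory of symmetric stable laws: the ``if'' direction via the spectral formula~\eqref{eq:spectral} and products of one-dimensional stable characteristic functions, the ``only if'' direction via Bochner's theorem, the convolution-semigroup/scaling structure, and the L\'evy--Khintchine representation of symmetric strictly stable distributions on $\R^d$. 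Your route is more self-contained but imports a comparably heavy external fact (the spectral representation of stable laws, which is in fact what underlies Koldobsky's theorem); the paper's citation is terser but defers to the same circle of ideas. One small point worth making explicit: the L\'evy--Khintchine spectral form you invoke in the final display applies directly for $2H<2$, while the boundary case $H=1$ is Gaussian and requires the separate (easy) observation that quadratic forms correspond to $L_2$-balls, as in Example~\ref{ex:balls}.
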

\begin{proof}
  It is known \cite[Th.~6.6]{kold05} that $e^{-c\|z\|_F^p}$ is
  positive definite if and only if $(\R^d,\|\cdot\|_F)$ isometrically
  embeds in $L_p([0,1])$, meaning that $F$ is an $L_p$-ball. By
  considering $e^{-c|t|^p\|z\|_F}$ as a function of $t\in\R$ with a
  fixed $z$, it is easily seen that $p\in(0,2]$. The rest follows from
  Lemma~\ref{lemma:lem1}.
\end{proof}

Since the associated star body $F$ is an $L_p$-ball, the norm
$\|u\|_F$ admits representation \eqref{eq:spectral}, and the measure
$\sigma$ from \eqref{eq:spectral} is called the \emph{spectral
  measure} of the corresponding MfBf $X_F(\cdot)$.

\begin{example}[L\'evy fBf]
  \label{ex:levy-fbf}
  If $\sigma$ is the rotation invariant measure on the unit sphere
  with the total mass
  \begin{displaymath}
    \sigma(\Sphere)=\frac{\Gamma(H+\frac{d}{2})}{2\pi^{(d-1)/2}\Gamma(H+\thf)}\,,
  \end{displaymath}
  then $F$ is the unit Euclidean ball, and we recover the L\'evy fBf.
\end{example}

If $F$ is an $L_p$-ball with $p\in(0,2]$, then $F$ is also an
$L_r$-ball for all $r\in(0,p]$, see \cite[Cor.~6.7]{kold05}. Thus, if
the MfBf with the associated star body $F$ exists for some Hurst
index $H$, then it exists for all Hurst indices $H'\in(0,H]$.

\begin{example}
  Let 
  \begin{displaymath}
    F=\{x=(x_1,\dots,x_d)\in\R^d:\; |x_1|^p+\cdots+|x_d|^p\leq1\},
  \end{displaymath}
  be the centred $\ell_p$-ball in $\R^d$ with $d\geq2$ for
  $p\in(0,2]$. The MfBf with the associated star body $F$ exists if
  and only if $H\in(0,\thf p]$. For $d=2$, the MfBf exists for any
  convex centred star body $F$ and $H\in(0,\thf]$, see also
  \cite{kol91s}.  Indeed, it is well known \cite[Cor.~3.5.7]{schn2}
  that all centred convex bodies in the plane are $L_1$-balls and so
  $L_p$-balls for $p\in(0,1]$.
\end{example}

\begin{example}
  If $H=1$, then $F$ is an $L_2$-ball, so $F$ is necessarily an
  ellipsoid that corresponds to the quadratic form determined by
  matrix $A$, see Example~\ref{ex:balls}. In this case $X_F(z)=\langle
  z,\xi\rangle$ for centred normally distributed random vector $\xi$
  with the covariance matrix $A$. If $H\in(0,1)$ and $F=\{z:\; \langle
  Az,z\rangle\leq 1\}$ is an ellipsoid with a strictly positive
  definite matrix $A$, then $X_F(A^{-1/2}z)$ is the L\'evy fBf with the
  covariance given by \eqref{eq:1}.
\end{example}

\begin{example}
  The family of $L_1$-balls is the family of polar bodies to zonoids,
  well-known from convex geometry \cite[Sec.~3.5]{schn2}. Thus, the
  family of all Minkowski Brownian fields (that appear for $H=\thf$)
  corresponds to the family of zonoids.
\end{example}

Since $F$ is an $L_p$-ball with $p=2H$, \eqref{eq:spectral} implies
that $\|z\|_F^{2H}\leq c\|z\|^{2H}$ for a constant $c$, i.e. the
variance of the increments of the MfBf is bounded (up to a constant)
by that of the L\'evy fBf. Therefore, the MfBf inherits the local
properties from the L\'evy fBf with the same Hurst parameter, in
particular it is a.s. continuous.

\subsection{Integral representations}
\label{sec:integr-repr}

Unless $F$ is the Euclidean ball, it is not possible to use the
arguments based on the rotational invariance (like in
\cite{herb06,lind93}) to derive integral representations of the MfBf.

Let $m$ be the measure on $\R^d$ whose polar representation has the
directional component $\sigma(du)$ (being the spectral measure from
\eqref{eq:spectral}) and the radial component
$(2\pi)^{-d/2}r^{d-1}dr$. Consider a Gaussian measure $W_\sigma$ on
$\R^d$ with the control measure $m$, so that, for each square
integrable function $f$,
\begin{align*}
  \E\left(\int_{\R^d} f(x) W_\sigma(dx)\right)^2
  &=\int_{\R^d} f(x)m(dx)\\
  &=\frac{1}{(2\pi)^{d/2}}\int_{\Sphere}
  \int_0^\infty f(ru) r^{d-1}dr\sigma(du)\,.
\end{align*}
If $\sigma$ is the surface area measure on the unit sphere (the
$(d-1)$-dimensional Hausdorff measure), then $W_\sigma$ is the
conventional Brownian random measure as considered in
\cite[Sec.~2.1.6.1]{coh:ist13} (also called the white noise) up to a
multiplicative constant. The Fourier transform $\hat{W}_\sigma$ of
$W_\sigma$ is defined in the sense of generalised functions as in
\cite[Def.~2.1.16]{coh:ist13}.

\begin{theorem}
  \label{thr:harm}
  The MfBf with the spectral measure $\sigma$ is given by
  \begin{equation}
    \label{eq:hr}
    X_F(z)=a_{H,d}
    \int_{\R^d} \frac{e^{\imath \langle z,y\rangle}-1}{\|y\|^{H+d/2}}
    \hat{W}_\sigma(dy)\,,
  \end{equation}
  where
  \begin{displaymath}
    a_{H,d}=2(2\pi)^{\frac{d-1}{4}}(H\Gamma(2H)\sin (H\pi))^{\thf}\,.
  \end{displaymath}
\end{theorem}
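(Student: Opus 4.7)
The plan is to compute the variance of the increments of the right-hand side of \eqref{eq:hr} and show that it matches $\|z_1-z_2\|_F^{2H}$. Since both sides are centred Gaussian fields with stationary increments (the right-hand side being a Wiener-type integral of a deterministic kernel against a Gaussian random measure), equality of incremental variances forces equality of all finite-dimensional distributions, which is the content of the theorem.

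First, I would check integrability: in polar coordinates the kernel $y\mapsto (e^{\imath\langle z,y\rangle}-1)/\|y\|^{H+d/2}$ gives a radial contribution of $|e^{\imath r\langle z,u\rangle}-1|^2/r^{2H+1}$, integrable at $0$ because $H<1$ (using $|e^{\imath s}-1|\le|s|$) and at $\infty$ because $H>0$, while the angular part is controlled by $\sigma(\Sphere)<\infty$. Using the $L^2$-isometry for integrals against $\hat W_\sigma$ from \cite[Def.~2.1.16]{coh:ist13} and the identity $|e^{\imath\langle z_1,y\rangle}-e^{\imath\langle z_2,y\rangle}|=|e^{\imath\langle z,y\rangle}-1|$ with $z=z_1-z_2$, one obtains
\begin{equation*}
  \E|X_F(z_1)-X_F(z_2)|^2
  = a_{H,d}^2\int_{\R^d}\frac{|e^{\imath\langle z,y\rangle}-1|^2}{\|y\|^{2H+d}}\,m(dy).
\end{equation*}

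Passing to polar coordinates using the definition of $m$ and substituting $s=r|\langle z,u\rangle|$ in the radial variable separates the expression into the directional integral
\begin{equation*}
  \int_{\Sphere}|\langle z,u\rangle|^{2H}\sigma(du)=\|z\|_F^{2H},
\end{equation*}
which is exactly \eqref{eq:spectral}, times the radial factor
\begin{equation*}
  \int_0^\infty\frac{|e^{\imath s}-1|^2}{s^{2H+1}}\,ds
  = 2\int_0^\infty\frac{1-\cos s}{s^{2H+1}}\,ds,
\end{equation*}
a classical integral with explicit value proportional to $1/(H\Gamma(2H)\sin(H\pi))$. Combining these factors with the $(2\pi)^{-d/2}$ prefactor of $m$ shows that the normalising constant $a_{H,d}$ stated in the theorem is precisely the one that makes the remaining coefficient equal to $1$, yielding $\E|X_F(z_1)-X_F(z_2)|^2=\|z_1-z_2\|_F^{2H}$.

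One final point to address is that \eqref{eq:hr} is indeed $\R$-valued: the kernel $f(y)=(e^{\imath\langle z,y\rangle}-1)/\|y\|^{H+d/2}$ satisfies $f(-y)=\overline{f(y)}$, and the evenness of $\sigma$ (and hence of $m$) makes $\hat W_\sigma$ Hermitian, so the imaginary part of the integral vanishes. The principal obstacle is not conceptual but notational: carefully invoking the Plancherel-type isometry for the Fourier-transformed Gaussian random measure $\hat W_\sigma$ in the generalised-function sense, and keeping track of the $(2\pi)^{-d/2}$ normalisation that is built into the control measure $m$ precisely so that this isometry takes the clean form used above. Once the isometry is established, the remainder is a bookkeeping calculation involving a single well-known Fourier-analytic integral.
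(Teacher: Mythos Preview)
Your proposal is correct and follows essentially the same route as the paper: pass to polar coordinates with respect to the control measure $m$, use the evenness of $\sigma$ to reduce the kernel to $(1-\cos(r\langle z,u\rangle))$, separate the radial factor (evaluated via the classical integral $\int_0^\infty(1-\cos t)t^{-2H-1}\,dt$) from the angular factor (which is \eqref{eq:spectral}), and determine $a_{H,d}$ from the resulting normalisation condition. The paper is simply terser---it defers the isometry step and the real-valuedness check to \cite[Prop.~2]{herb06}---whereas you spell out the integrability and Hermitian-symmetry arguments explicitly; the mathematical content is identical.
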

\begin{proof}
  By passing to the polar coordinates and noticing that the measure
  $\sigma$ is even,
  \begin{align*}
    \int_{\R^d} \frac{1-e^{\imath \langle z,y\rangle}}{\|y\|^{2H+d}} m(dy)
    &=\frac{1}{(2\pi)^{d/2}}\int_{\Sphere}\int_0^\infty \left(1-e^{\imath t\langle z,u\rangle}\right)
    t^{-2H-1}dt \sigma(du)\\
    &=\frac{1}{(2\pi)^{d/2}}\int_{\Sphere}\int_0^\infty (1-\cos(t\langle z,u\rangle))
    t^{-2H-1}dt \sigma(du)\\
    &=\frac{c_H}{(2\pi)^{d/2}}\int_{\Sphere} |\langle z,u\rangle|^{2H} \sigma(du)\,,
  \end{align*}
  where
  \begin{displaymath}
    c_H=\int_0^\infty (1-\cos t)t^{-2H-1}dt
    =\frac{\pi}{4H\Gamma(2H)\sin (H\pi)} \,,
  \end{displaymath}
  see \cite[p.~329]{sam:taq94}.  The rest of the proof is carried over
  similarly to \cite[Prop.~2]{herb06}. The normalising constant
  $a_{H,d}$ is derived from the condition that
  $a_{H,d}^2c_H(2\pi)^{-d/2}=\thf$.
\end{proof}

\begin{corollary}
  The MfBf can be represented as
  \begin{equation}
    \label{eq:ir}
    X_F(z)=a_{H,d}b_{H,d}\int_{\R^d}[\|z-u\|^{H-d/2}-\|u\|^{H-d/2}]W_\sigma(du)
  \end{equation}
  for
  \begin{displaymath}
    b_{H,d}=2^{-H}
    \frac{\Gamma\left(\thf(-H+d/2)\right)}
    {\Gamma\left(\thf(H+d/2)\right)}\,.
  \end{displaymath}
\end{corollary}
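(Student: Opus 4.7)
The plan is to derive \eqref{eq:ir} from the harmonisable representation \eqref{eq:hr} via a Parseval identity for the stochastic integrals. The required relation, valid for $f\in L^2(m)$ in the convention of \cite[Def.~2.1.16]{coh:ist13}, is
\begin{equation*}
  \int_{\R^d} f(u)\,W_\sigma(du)=\int_{\R^d}\hat f(y)\,\hat W_\sigma(dy).
\end{equation*}
Applied to $f_z(u):=\|u-z\|^{H-d/2}-\|u\|^{H-d/2}$, this reduces the task to computing $\hat f_z$ and matching constants with the integrand of \eqref{eq:hr}.

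For the Fourier-transform computation I would use the classical Riesz-potential identity, valid in the tempered-distribution sense for $0<\alpha<d$,
\begin{equation*}
  \widehat{\|\cdot\|^{-\alpha}}(y)=\frac{2^{d-\alpha}\pi^{d/2}\Gamma(\thf(d-\alpha))}{\Gamma(\thf\alpha)}\,\|y\|^{\alpha-d},
\end{equation*}
specialised to $\alpha=d/2-H$; this parameter lies in $(0,d)$ because $H\in(0,1]$ and $d\geq2$. The translation identity $\widehat{\|\cdot-z\|^{-\alpha}}(y)=e^{\imath\langle z,y\rangle}\widehat{\|\cdot\|^{-\alpha}}(y)$ then yields
\begin{equation*}
  \hat f_z(y)=\frac{2^{H+d/2}\pi^{d/2}\Gamma(\thf(H+d/2))}{\Gamma(\thf(-H+d/2))}\cdot\frac{e^{\imath\langle z,y\rangle}-1}{\|y\|^{H+d/2}}.
\end{equation*}
Substituting into the right-hand side of \eqref{eq:ir} and equating the result with \eqref{eq:hr}, while absorbing the factor $(2\pi)^{-d/2}$ embedded in the control measure $m$ of $W_\sigma$ into the $\pi^{d/2}2^{d/2}$ above, forces the prefactor to be precisely $b_{H,d}=2^{-H}\Gamma(\thf(-H+d/2))/\Gamma(\thf(H+d/2))$.

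A short preliminary step is to check that $f_z\in L^2(m)$ so that Parseval is legitimate. In polar coordinates $f_z(ru)$ behaves like $r^{H-d/2}$ near the singularities $u=0$ and $u=z$, whose square integrates against $r^{d-1}dr$ for $H>0$; a first-order Taylor expansion of the difference gives $f_z(u)=O(\|u\|^{H-d/2-1})$ at infinity, whose square integrates for $H<1$. Since $\sigma$ is a finite measure on $\Sphere$, the angular integration is harmless.

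The main obstacle is the careful reconciliation of conventions: the definition of $\hat W_\sigma$ from \cite[Def.~2.1.16]{coh:ist13}, the $(2\pi)^{-d/2}$ placed in the control measure $m$, and the normalisation of the Riesz formula must all be aligned before the constants can be matched. Once these are pinned down, the identification of $b_{H,d}$ is mechanical and the corollary follows in exactly the same spirit as \cite[Prop.~2]{herb06} already invoked in the proof of Theorem~\ref{thr:harm}.
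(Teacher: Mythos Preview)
Your proposal is correct and follows essentially the same route as the paper: the paper's proof is a single sentence noting that the integrand in \eqref{eq:ir} is, up to the constant $b_{H,d}$, the Fourier transform of the integrand in \eqref{eq:hr} (citing \cite{gel:shil64}), which is exactly what your Riesz-potential computation and Parseval step carry out in detail. Your added verification that $f_z\in L^2(m)$ and your explicit tracking of the normalising conventions supply rigor that the paper leaves implicit.
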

\begin{proof}
  It suffices to note that the integrand from \eqref{eq:ir} is the
  Fourier transform of the integrand from \eqref{eq:hr} up to the
  constant $b_{H,d}$, see \cite{gel:shil64} noticing that the Fourier
  transform is defined with the factor $(2\pi)^{-d/2}$.
\end{proof}

\begin{example}
  Let the spectral measure $\sigma$ attach the masses $\thf$ to the
  points $\{\pm v\}$ for a fixed $v\in\Sphere$. Then $\|z\|_F=|\langle
  z,v\rangle|$. The corresponding covariance function
  \begin{displaymath}
    C_v(z_1,z_2)=\thf\left[|\langle z_1,v\rangle|^{2H}
    +|\langle z_2,v\rangle|^{2H}
    -|\langle z_1-z_2,v\rangle|^{2H}\right]
  \end{displaymath}
  is positive definite for all $H\in(0,1]$ and defines the MfBf
  $Y_v$. It is easy to see that $Y_v(z)=B_H(\langle z,v\rangle)$ for
  the univariate fBm $B_H$.
\end{example}

\begin{proposition}
  Let $B_H(t)$ be the fBm on the real line and let $M$ be an
  independent of $B_H$ Gaussian white noise on $\Sphere$ with the
  control measure $\sigma$. Then
  \begin{equation}
    \label{eq:plain-wave}
    X_F(z)=\int_{\Sphere} B_H(\langle z,v\rangle)M(dv)
  \end{equation}
  is the MfBf with the spectral measure $\sigma$.
\end{proposition}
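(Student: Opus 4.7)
The plan is to show that $X_F$ defined by~\eqref{eq:plain-wave} is a centred Gaussian random field with covariance $C_F$ from~\eqref{eq:cov-F}, since the MfBf is uniquely determined by its covariance.

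The computational core is the covariance. I fix $z_1,z_2\in\R^d$, use the independence of $B_H$ and $M$, and condition on $B_H$. The It\^o isometry for $M$ with control measure $\sigma$ gives
\[
  \E\bigl[X_F(z_1)X_F(z_2)\bigm|B_H\bigr]
  =\int_{\Sphere} B_H(\langle z_1,v\rangle)\,B_H(\langle z_2,v\rangle)\,\sigma(dv).
\]
Taking expectations through Fubini and substituting the fBm covariance $\E[B_H(s)B_H(t)]=\thf(|s|^{2H}+|t|^{2H}-|s-t|^{2H})$ with $s=\langle z_1,v\rangle$ and $t=\langle z_2,v\rangle$, the problem reduces to three integrals of the form $\int_{\Sphere}|\langle z,v\rangle|^{2H}\,\sigma(dv)$. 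By the spectral representation~\eqref{eq:spectral} each such integral equals $\|z\|_F^{2H}$, which yields exactly $\thf(\|z_1\|_F^{2H}+\|z_2\|_F^{2H}-\|z_1-z_2\|_F^{2H})=C_F(z_1,z_2)$. Centring is immediate from $\E M\equiv 0$ together with the independence assumption.

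The main obstacle is Gaussianity. As written, $X_F(z)$ is the integral against $M$ of the random integrand $v\mapsto B_H(\langle z,v\rangle)$, which makes it conditionally Gaussian given $B_H$ but \emph{a priori} only a mixture of Gaussians unconditionally. To secure genuine Gaussian finite-dimensional distributions I would unfold $B_H$ through its moving-average kernel $B_H(t)=\int_{\R}K_H(t,s)W(ds)$, with $W$ a Brownian white noise on $\R$ independent of $M$, and reinterpret~\eqref{eq:plain-wave} as a single Wiener integral against the product white noise $W\otimes M$ on $\R\times\Sphere$ with control measure $ds\,\sigma(dv)$. The resulting integrand $K_H(s,\langle z,v\rangle)$ is deterministic, so the finite-dimensional distributions of $X_F$ are automatically Gaussian; combined with the covariance identified above, this characterises $X_F$ as the MfBf with spectral measure $\sigma$.
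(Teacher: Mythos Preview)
Your covariance computation is exactly the paper's proof: condition on $B_H$, apply the isometry for $M$, swap expectation and integral by Fubini, substitute the fBm covariance, and recognise the three terms via the spectral representation~\eqref{eq:spectral}. The paper's argument stops there.

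You are right to flag Gaussianity, and this is a point the paper does not address. With the random integrand $v\mapsto B_H(\langle z,v\rangle)$, the integral in~\eqref{eq:plain-wave} is only conditionally Gaussian given $B_H$; unconditionally it is a variance mixture. Concretely, if $W$ is the white noise driving $B_H$, the iterated integral lies in the second Wiener chaos of the Gaussian pair $(W,M)$ and is therefore \emph{not} Gaussian (for a discrete even $\sigma$ one sees directly a sum of products of independent Gaussians). So, read literally, \eqref{eq:plain-wave} does not define the MfBf.

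Your remedy needs one adjustment. The phrase ``product white noise $W\otimes M$'' is misleading: from the given independent $W$ and $M$ one cannot manufacture a Gaussian white noise on $\R\times\Sphere$ whose Wiener integrals reproduce the iterated ones, precisely because $W(A)M(B)$ is a second-chaos object, not first. What works is to introduce a \emph{fresh} Gaussian white noise $\tilde N$ on $\R\times\Sphere$ with control measure $ds\,\sigma(dv)$ and set
\[
  \tilde X_F(z)=\int_{\R\times\Sphere} K_H(\langle z,v\rangle,s)\,\tilde N(ds,dv).
\]
This is a Wiener integral of a deterministic kernel, hence Gaussian, and your covariance calculation (with the inner $ds$-integral collapsing to the fBm covariance) gives $C_F$. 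You thus obtain the MfBf, but as a modified construction rather than as a reinterpretation of the same random variable~\eqref{eq:plain-wave}.
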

\begin{proof}
  The covariance of $X_F(z)$ is
  \begin{align*}
    \E[X_F(z_1)&X_F(z_2)]
    =\E[\E[X_F(z_1)X_F(z_2)|B_H]]\\
    &=\E\int_{\Sphere} B_H(\langle z_1,v\rangle)
    B_H(\langle z_2,v\rangle)\sigma(dv)\\
    &=\thf \int_{\Sphere}\left[|\langle z_1,v\rangle|^{2H}
    +|\langle z_2,v\rangle|^{2H}
    -|\langle z_1-z_2,v\rangle|^{2H}\right]\sigma(dv)
  \end{align*}
  and so coincides with \eqref{eq:cov-F}.
\end{proof}

In particular, if $\sigma$ is the rotation invariant measure on
$\Sphere$ from Example~\ref{ex:levy-fbf}, then \eqref{eq:plain-wave} can be
viewed as the analogue of the plain-wave expansion of the norm
\cite[Sec.~I.3.10]{gel:shil64}. Using \eqref{eq:plain-wave}, the results
for the univariate fractional Brownian motion can be extended to the
multivariate setting. 

\subsection{Associated star bodies}
\label{sec:assoc-star-bodi}

Consider here the properties of the MfBf in relation to their
associated star bodies.

\begin{proposition}
  \label{prop:weak-conv}
  If $\{F_n, n\geq1\}$ is a sequence of $L_p$-balls with $p=2H$, such
  that $\|u\|_{F_n}\to \|u\|_F$ for a star body $F$, then $F$ is an
  $L_p$-ball and the finite-dimensional distributions of $X_{F_n}$
  converge to those of $X_F$. If, additionally, there exists $\eps>0$
  such that $F_n\supset \eps B_2^d$ for all sufficiently large $n$,
  then $X_{F_n}$ weakly converges to $X_F$ in the space of continuous
  functions on any compact subset of $\R^d$.
\end{proposition}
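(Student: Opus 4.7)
The plan is to establish the three assertions in order: that $F$ is itself an $L_p$-ball, convergence of finite-dimensional distributions of $X_{F_n}$, and then, under the extra lower bound on $F_n$, tightness in the space of continuous functions on compact sets.

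For the first step, I would invoke the characterisation used in the proof of Proposition~\ref{prop:fbm-exist}: a centred star body $G$ is an $L_p$-ball if and only if $e^{-c\|z\|_G^{p}}$ is positive definite for every $c>0$. Each $F_n$ is an $L_p$-ball, so the corresponding exponentials are positive definite; the pointwise convergence $\|u\|_{F_n}\to\|u\|_F$ yields $e^{-c\|z\|_{F_n}^{p}}\to e^{-c\|z\|_F^{p}}$, and pointwise limits of positive definite functions are positive definite (because pointwise limits of positive semi-definite matrices remain positive semi-definite). Hence $F$ is an $L_p$-ball, and by Proposition~\ref{prop:fbm-exist} the field $X_F$ exists.

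Convergence of finite-dimensional distributions is then immediate: for any $z_1,\dots,z_k\in\R^d$, the covariance matrix of $(X_{F_n}(z_1),\dots,X_{F_n}(z_k))$ has entries $C_{F_n}(z_i,z_j)$ converging to $C_F(z_i,z_j)$ by the pointwise convergence of the norms, and centred Gaussian vectors with convergent covariance matrices converge weakly. For weak convergence in $C(K)$ with $K\subset\R^d$ compact, it remains to prove tightness. The assumption $F_n\supset\eps B_2^d$ for all large $n$ gives the uniform bound $\|u\|_{F_n}\leq\eps^{-1}\|u\|$, whence
\begin{displaymath}
  \E\bigl(X_{F_n}(z_1)-X_{F_n}(z_2)\bigr)^2
  =\|z_1-z_2\|_{F_n}^{2H}\leq \eps^{-2H}\|z_1-z_2\|^{2H}.
\end{displaymath}
By Gaussian moment equivalence, $\E|X_{F_n}(z_1)-X_{F_n}(z_2)|^q\leq c_q\eps^{-qH}\|z_1-z_2\|^{qH}$ for every $q\geq 2$. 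Choosing $q$ with $qH>d$ and applying the Kolmogorov--Chentsov criterion produces an $n$-uniform modulus of continuity on $K$; combined with the finite-dimensional convergence and the trivial bound on $\E X_{F_n}(0)^2=0$, this yields $X_{F_n}\Rightarrow X_F$ in $C(K)$.

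The main obstacle is the first step: one must be confident that the $L_p$-ball property survives pointwise limits of norms, since this is not obvious from the spectral representation \eqref{eq:spectral} alone (the spectral measures $\sigma_n$ could a priori escape to infinity in total mass, requiring a separate tightness argument). Routing the proof through positive definiteness of $e^{-c\|\cdot\|^{p}}$ bypasses this by working directly with a property that is manifestly closed under pointwise limits. The tightness step is then routine, but the hypothesis $F_n\supset\eps B_2^d$ is essential: without it the norms $\|\cdot\|_{F_n}$ could become arbitrarily large in some directions, destroying any uniform Hölder control on the increments.
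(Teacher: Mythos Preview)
Your proof is correct and follows essentially the same approach as the paper: both arguments deduce that $F$ is an $L_p$-ball from the closure of positive definiteness under pointwise limits (you work with $e^{-c\|\cdot\|^p}$ directly, the paper works with the covariance kernel $C_F$, but these are equivalent via Lemma~\ref{lemma:lem1}), obtain finite-dimensional convergence from convergence of covariances, and prove tightness via a uniform Kolmogorov-type moment bound using $F_n\supset\eps B_2^d$.
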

\begin{proof}
  The convergence of finite dimensional distributions follows
  from the convergence of covariance functions, and the
  positive definiteness of the limiting covariance implies that $F$ is
  an $L_p$-ball. Notice that 
  \begin{displaymath}
    \E (X_{F_n}(z_1)-X_{F_n}(z_2))^{2k}
    =c_1\|z_1-z_2\|_{F_n}^{2Hk}\leq c_2\|z_1-z_2\|^{2Hk}.
  \end{displaymath}
  for all $z_1,z_2$ from a compact subset of $\R^d$, constants
  $c_1,c_2$, and some $k\geq1$ such that $2kH>d$.  The weak
  convergence follows from the tightness condition from
  \cite{ibr:has81}, see also \cite[Th.~2]{dav:zit08}.
\end{proof}

\begin{proposition}
  \label{prop:slepian}
  For two MfBf's $X_{F_1}$ and $X_{F_2}$, we have
  \begin{equation}
    \label{eq:slepian}
    \E \sup_{z\in D}|X_{F_1}(z)|\geq \E \sup_{z\in D}|X_{F_2}(z)|
  \end{equation}
  for each compact set $D\subset\R^d$ if and only if $F_1\subset F_2$.
\end{proposition}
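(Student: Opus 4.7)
For the \emph{necessity} direction, I would test \eqref{eq:slepian} on singletons $D=\{z\}$. Since $X_F(z)\sim N(0,\|z\|_F^{2H})$ one has $\E|X_F(z)|=\sqrt{2/\pi}\,\|z\|_F^H$, so \eqref{eq:slepian} reduces to $\|z\|_{F_1}\geq\|z\|_{F_2}$; valid for every $z\in\R^d$ this is exactly $F_1\subset F_2$ by definition of the Minkowski functional.

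For the \emph{sufficiency} direction, assume $F_1\subset F_2$, equivalently $\|u\|_{F_1}\geq\|u\|_{F_2}$ for every $u\in\R^d$. This transfers to the canonical pseudo-metric of the Gaussian process,
\[
d_F(z_1,z_2):=\bigl(\E(X_F(z_1)-X_F(z_2))^2\bigr)^{1/2}=\|z_1-z_2\|_F^H,
\]
giving $d_{F_1}\geq d_{F_2}$. To convert a comparison of canonical distances into a comparison of expected suprema of \emph{absolute} values, I would apply the Sudakov--Fernique inequality to the doubled centred Gaussian process $\tilde X_F(z,\eps):=\eps X_F(z)$ on $T:=D\times\{+1,-1\}$, for which $\sup_T\tilde X_F=\sup_{z\in D}|X_F(z)|$. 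The Sudakov--Fernique hypothesis is that $\E(\tilde X_{F_1}(u)-\tilde X_{F_1}(v))^2\geq\E(\tilde X_{F_2}(u)-\tilde X_{F_2}(v))^2$ for all $u,v\in T$. The ``same-sign'' case is exactly $d_{F_1}^2\geq d_{F_2}^2$ and is immediate from $F_1\subset F_2$; the ``opposite-sign'' case reads
\[
2\|z_1\|_{F_1}^{2H}+2\|z_2\|_{F_1}^{2H}-\|z_1-z_2\|_{F_1}^{2H}\geq 2\|z_1\|_{F_2}^{2H}+2\|z_2\|_{F_2}^{2H}-\|z_1-z_2\|_{F_2}^{2H}.
\]

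The main obstacle is precisely this opposite-sign inequality: the first four terms move the right way under $F_1\subset F_2$, but the subtracted terms enter with the ``wrong'' sign and mix the monotonicity. I plan to attack it via the spectral representation \eqref{eq:spectral} supplied by Proposition~\ref{prop:fbm-exist}, rewriting both sides as integrals over $\Sphere$ of the \emph{pointwise nonnegative} integrand
\[
\rho_{z_1,z_2}(u):=2|\langle z_1,u\rangle|^{2H}+2|\langle z_2,u\rangle|^{2H}-|\langle z_1-z_2,u\rangle|^{2H}
\]
against the respective spectral measures $\sigma_{F_1}$ and $\sigma_{F_2}$ (its nonnegativity is inherited from the variance identity $\rho_{z_1,z_2}(u)=\Var(B_H(\langle z_1,u\rangle)+B_H(\langle z_2,u\rangle))$ used in \eqref{eq:plain-wave}). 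The inclusion $F_1\subset F_2$ is equivalent to $\int_\Sphere|\langle z,u\rangle|^{2H}d\sigma_{F_1}\geq\int_\Sphere|\langle z,u\rangle|^{2H}d\sigma_{F_2}$ for every $z$; the delicate final step is to propagate this partial order, generated on the cosine-transform test functions, to integrals of the specific combination $\rho_{z_1,z_2}$. If this propagation fails (and a priori it could), the fallback is to couple additively in the ellipsoid case $H=1$ via Example~\ref{ex:balls} (where $X_F(z)=\langle z,\xi\rangle$ and $\xi_{F_1}=\xi_{F_2}+\eta$ with $\eta$ independent and centred, giving the result by Anderson's inequality) and extend to general $H\in(0,1)$ by interpolating along the spectral measure $\sigma_t=(1-t)\sigma_{F_2}+t\sigma_{F_1}$, which by Proposition~\ref{prop:weak-conv} corresponds to an interpolation through valid $L_{2H}$-balls $F_t$ joining $F_2$ to $F_1$.
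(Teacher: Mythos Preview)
Your necessity argument via singletons is exactly the paper's.  For sufficiency the paper does \emph{not} double the index set: it applies the Sudakov--Fernique inequality directly to the original processes $X_{F_i}$ on $D$, using only the increment comparison
\[
\E\bigl(X_{F_1}(z_1)-X_{F_1}(z_2)\bigr)^2=\|z_1-z_2\|_{F_1}^{2H}\geq\|z_1-z_2\|_{F_2}^{2H}=\E\bigl(X_{F_2}(z_1)-X_{F_2}(z_2)\bigr)^2,
\]
and never confronts your ``opposite-sign'' inequality.  (Whether the passage from $\E\sup_D X_{F_i}$ to $\E\sup_D|X_{F_i}|$ is entirely routine is a separate issue; the paper treats it as such.)

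The route you chose, via the doubled process $\tilde X_F$ on $D\times\{\pm1\}$, hits a genuine obstruction: the opposite-sign inequality you display is in general \emph{false} under the hypothesis $F_1\subset F_2$.  Take $d=2$, $H=\tfrac12$, $F_1$ the unit $\ell_1$-ball and $F_2$ the Euclidean unit ball (so $F_1\subset F_2$ and both are $L_1$-balls), and $z_1=(1,0)$, $z_2=(0,1)$.  Then $\|z_j\|_{F_1}=\|z_j\|_{F_2}=1$ for $j=1,2$, while $\|z_1-z_2\|_{F_1}=2>\sqrt2=\|z_1-z_2\|_{F_2}$, so your inequality becomes $2\geq 4-\sqrt2$, which fails.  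Consequently the Sudakov--Fernique hypothesis on $\tilde X_F$ is \emph{not} satisfied, and neither of your proposed rescues can succeed: the spectral-measure argument cannot propagate the order from the test functions $|\langle z,\cdot\rangle|^{2H}$ to the combination $\rho_{z_1,z_2}$ (the counterexample shows the integrated inequality itself is false, not merely hard to prove), and the interpolation $\sigma_t=(1-t)\sigma_{F_2}+t\sigma_{F_1}$ is of no help for the same reason.  You should abandon the doubling trick here and follow the paper in applying Sudakov--Fernique to the undoubled process.
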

\begin{proof}
  The sufficiency follows from the Sudakov--Fernique inequality
  \cite[Th.~2.2.3]{adl:tay07} noticing that
  \begin{displaymath}
    \E(X_{F_1}(z_1)-X_{F_1}(z_2))^2
    =\|z_1-z_2\|_{F_1}^{2H} \geq \|z_1-z_2\|_{F_2}^{2H}
    =\E(X_{F_2}(z_1)-X_{F_2}(z_2))^2\,.
  \end{displaymath}
  For the reverse implication, consider $D=\{z\}$. Then
  $\E|X_{F_1}(z)|\geq \E|X_{F_2}(z)|$ implies $\E X_{F_1}(z)^2\geq \E
  X_{F_2}(z)^2$ and it remains to notice that the increments are
  stationary.
\end{proof}

Let $A\in\mathrm{GL}(d)$ be an invertible matrix. Then $X_{AF}(z)$ is
a version of $X_F(A^{-1}z)$, $z\in\R^d$. The MfBf's obtained by such
transformations may be regarded as equivalent in a certain sense. In
particular, all ellipsoids $F$ can be transformed to the unit ball in
this way, so that all MfBf's with elliptical associated star bodies
can be considered equivalent to the L\'evy fBf. 

In view of Proposition~\ref{prop:slepian}, the infimum of $t\geq1$
such that
\begin{displaymath}
  \E \sup_{z\in D}|X_{F_2}(z)|\geq
  \E \sup_{z\in D}|X_{F_1}(Az)|\geq \frac{1}{t}
  \E \sup_{z\in D}|X_{F_2}(z)|
\end{displaymath}
for some $A\in\mathrm{GL}(d)$ and for all compact sets $D$ in $\R^d$
equals the infimum of $t>0$ such that $F_2\subset AF_1\subset tF_2$
for some $A\in\mathrm{GL}(d)$, which is the \emph{Banach--Mazur
  distance} between the normed spaces $(\R^d,\|\cdot\|_{F_1})$ and
$(\R^d,\|\cdot\|_{F_2})$, see \cite[Sec.~2.1]{lin:mil93}. Since the
Banach--Mazur distance between $(\R^d,\|\cdot\|_F)$ and the Euclidean
space is at most $\sqrt{d}$ (see \cite{lin:mil93}), we deduce that for
each MfBf $X_F$ there is the L\'evy fBf $X$ such that
\begin{displaymath}
  \E \sup_{z\in D}|X(z)|\geq
  \E \sup_{z\in D}|X_F(Az)|\geq \frac{1}{\sqrt{d}}
  \E \sup_{z\in D}|X(z)|
\end{displaymath}
for some $A\in\mathrm{GL}(d)$ and all compact sets $D\subset\R^d$.

Equation~\eqref{eq:slepian} provides a possible ordering of Gaussian
processes. Another ordering (which is stronger than \eqref{eq:slepian}
for fields that vanish at the origin) is the \emph{convex ordering} of
all finite-dimensional distributions meaning that
\begin{displaymath}
  \E g(X_{F_1}(z_1),\dots,X_{F_1}(z_n))
  \geq \E g(X_{F_2}(z_1),\dots,X_{F_2}(z_n))
\end{displaymath}
for all $z_1,\dots,z_n\in\R^d$, $n\geq1$, and all convex functions
$g:\R^n\to\R$, see \cite{muel:stoy02}. In the case of centred Gaussian
processes, this is equivalent to the fact that the difference of
covariance matrices of finite-dimensional distributions of $X_{F_1}$
and $X_{F_2}$ is positive definite, see
\cite[Sec.~3.13]{muel:stoy02}. By Lemma~\ref{lemma:lem1}, this
holds if and only if $\exp\{-c(\|z\|_{F_1}^{2H}-\|z\|_{F_2}^{2H})\}$
is positive definite for all $c>0$.

\begin{proposition}
  The MfBf $X_{F_1}$ is greater than or equal to the MfBf $X_{F_2}$
  in the convex ordering if and only if $F_1=F_2+_p M$ for an
  $L_p$-ball $M$ and $p=2H$, equivalently, if $\sigma_1=\sigma_2+\nu$
  for a non-negative measure $\nu$, where $\sigma_i$ is the spectral
  measure of $F_i$, $i=1,2$.
\end{proposition}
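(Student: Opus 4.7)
The plan is to establish the two equivalences stated in the proposition separately: first, the purely geometric equivalence between the $p$-sum decomposition $F_1 = F_2 +_p M$ for some $L_p$-ball $M$ and the measure decomposition $\sigma_1 = \sigma_2 + \nu$ for some non-negative even measure $\nu$; and second, the equivalence between either of these and the convex ordering.

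For the first equivalence, I would invoke the spectral representation \eqref{eq:spectral}: the $p$-sum identity $\|z\|_{F_1}^p = \|z\|_{F_2}^p + \|z\|_M^p$ is the same as $\int_{\Sphere} |\langle z, u\rangle|^p \sigma_1(du) = \int_{\Sphere} |\langle z, u\rangle|^p \sigma_2(du) + \int_{\Sphere} |\langle z, u\rangle|^p \nu(du)$ with $\nu$ the spectral measure of $M$. Uniqueness of the spectral measure for $p \in (0,2)$ not an even integer (Koldobsky) then gives $\sigma_1 = \sigma_2 + \nu$; the boundary case $p = 2$, where $F_1$ and $F_2$ are ellipsoids, is handled by rephrasing in terms of the positive semidefinite matrices of Example~\ref{ex:balls}.

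For the second equivalence, the paragraph preceding the proposition already reduces convex ordering to the statement that $\exp\{-c(\|z\|_{F_1}^{2H} - \|z\|_{F_2}^{2H})\}$ is positive definite for every $c > 0$; writing $f(z) := \|z\|_{F_1}^{2H} - \|z\|_{F_2}^{2H}$, Lemma~\ref{lemma:lem1} makes this the same as $A_f$ being positive definite. Sufficiency is then direct: if $\sigma_1 = \sigma_2 + \nu$ with $\nu \geq 0$, let $M$ be the $L_p$-ball with spectral measure $\nu$ (existing by Proposition~\ref{prop:fbm-exist}); then \eqref{eq:spectral} gives $f(z) = \|z\|_M^{2H}$, whence $A_f(z_1,z_2) = 2 C_M(z_1,z_2)$ is positive definite as twice the covariance of $X_M$.

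Necessity is the delicate direction. Assume $A_f$ is positive definite. Setting $z_2 = 0$ yields $f \geq 0$, and Lemma~\ref{lemma:lem1} gives $e^{-cf}$ positive definite for every $c > 0$. Since $f$ is even, continuous, satisfies $f(0) = 0$, and is positively homogeneous of degree $2H$, each $e^{-cf}$ is the characteristic function of a symmetric $2H$-stable law on $\R^d$. The classical representation of such laws then produces $f(z) = \int_{\Sphere} |\langle z, u\rangle|^{2H} \tau(du)$ for a non-negative even measure $\tau$ on $\Sphere$ when $2H < 2$, and $f(z) = \langle z, B z\rangle$ for a positive semidefinite matrix $B$ when $2H = 2$. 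Comparing with $f(z) = \|z\|_{F_1}^{2H} - \|z\|_{F_2}^{2H}$ and invoking the same uniqueness considerations as in the first equivalence identifies $\tau$ (respectively $B$) with $\sigma_1 - \sigma_2$ (respectively $A_1 - A_2$), giving $\sigma_1 - \sigma_2 \geq 0$ and hence $F_1 = F_2 +_p M$ for the $L_p$-ball $M$ with spectral measure $\tau$. The main obstacle will be this final identification: rigorously forcing the homogeneity of $f$ to yield the polar (stable) form of the underlying L\'evy--Khintchine measure, and separately handling the non-uniqueness of spectral measures in the ellipsoid case $H = 1$ via the matrix representation of Example~\ref{ex:balls}.
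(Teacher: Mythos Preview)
Your proposal is correct and follows essentially the same route as the paper: the paper's two-sentence proof simply writes the nonnegative $2H$-homogeneous function $f(z)=\|z\|_{F_1}^{2H}-\|z\|_{F_2}^{2H}$ as $\|z\|_M^{2H}$ for a star body $M$ (so that $F_1=F_2+_pM$ by definition of the $p$-sum) and then implicitly invokes Proposition~\ref{prop:fbm-exist} (Koldobsky's Th.~6.6) to conclude that $M$ is an $L_p$-ball iff $e^{-cf}$ is positive definite for all $c>0$, with spectral measures adding under the $p$-sum. Your passage through the characteristic-function representation of symmetric $2H$-stable laws is precisely the content of that Koldobsky result, so the arguments coincide; your extra care about non-uniqueness of the spectral measure at $H=1$ is a refinement the paper does not spell out.
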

\begin{proof}
  Note that $\|z\|_{F_1}^{2H}-\|z\|_{F_2}^{2H}$ is a homogenous
  function that can we written as $\|z\|_M^{2H}$ with $F_1=F_2+_pM$ by
  the definition of the $p$-sum, and the $p$-sum of two $L_p$-balls
  corresponds to the arithmetic addition of their spectral measures.
\end{proof}

Sums of independent MfBf's can be interpreted as follows.

\begin{proposition}
  \label{prop:sum}
  Let $X_{F_1}$ and $X_{F_2}$ be two independent MfBf's with
  associated star bodies $F_1$ and $F_2$. Then $X_{F_1}+X_{F_2}$ is
  the MfBf with the associated star body $F=F_1+_p F_2$ being the
  $p$-sum of $F_1$ and $F_2$ for $p=2H$.
\end{proposition}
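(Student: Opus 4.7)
The plan is to verify that $Y(z) := X_{F_1}(z)+X_{F_2}(z)$ satisfies the defining properties of an MfBf with associated star body $F = F_1 +_p F_2$, and then check that $F$ is indeed a valid associated star body, i.e.\ an $L_p$-ball with $p=2H$.

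First I would observe that, as a sum of two independent centred Gaussian fields, $Y$ is itself a centred Gaussian field. By independence, the covariance splits:
\begin{displaymath}
  \E[Y(z_1)Y(z_2)] = C_{F_1}(z_1,z_2) + C_{F_2}(z_1,z_2).
\end{displaymath}
Plugging in the defining formula \eqref{eq:cov-F} for each term and grouping like terms gives
\begin{displaymath}
  \E[Y(z_1)Y(z_2)] = \thf\bigl[(\|z_1\|_{F_1}^{2H}+\|z_1\|_{F_2}^{2H}) + (\|z_2\|_{F_1}^{2H}+\|z_2\|_{F_2}^{2H}) - (\|z_1-z_2\|_{F_1}^{2H}+\|z_1-z_2\|_{F_2}^{2H})\bigr].
\end{displaymath}

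Next I would invoke the definition of the $p$-sum recalled in Section~\ref{sec:norms-star-bodies}: with $p=2H$, the star body $F=F_1+_p F_2$ is characterised by $\|u\|_F^{p} = \|u\|_{F_1}^{p}+\|u\|_{F_2}^{p}$ for every $u\in\R^d$. Substituting this identity in each of the three bracketed terms above turns the right-hand side into $\thf\bigl[\|z_1\|_F^{2H}+\|z_2\|_F^{2H}-\|z_1-z_2\|_F^{2H}\bigr] = C_F(z_1,z_2)$, matching \eqref{eq:cov-F} exactly.

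It remains to confirm that $F$ is admissible, i.e.\ that the MfBf with associated star body $F$ actually exists, so that Definition~\ref{def:f-fbm} applies. By Proposition~\ref{prop:fbm-exist}, we must check that $F$ is an $L_p$-ball with $p=2H$. Since $F_1$ and $F_2$ are $L_p$-balls, representation \eqref{eq:spectral} gives finite even measures $\sigma_1,\sigma_2$ on $\Sphere$ with $\|z\|_{F_i}^p = \int_{\Sphere}|\langle z,u\rangle|^p\,\sigma_i(du)$. Adding these identities and using the definition of the $p$-sum yields
\begin{displaymath}
  \|z\|_F^p = \int_{\Sphere}|\langle z,u\rangle|^p\,(\sigma_1+\sigma_2)(du),
\end{displaymath}
so $F$ is itself an $L_p$-ball with spectral measure $\sigma_1+\sigma_2$ (which is again finite and even). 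This is the only step requiring any thought; the rest is bookkeeping. Finally, since centred Gaussian fields are determined by their covariance, $Y$ equals the MfBf $X_F$ in distribution, completing the proof.
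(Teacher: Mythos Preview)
Your argument is correct and is exactly the natural verification: the sum of independent centred Gaussian fields is centred Gaussian with covariance equal to the sum of covariances, the definition of the $p$-sum with $p=2H$ then identifies the resulting covariance as $C_F$, and the admissibility of $F$ follows since the spectral measures add. The paper states this proposition without proof, so there is no alternative approach to compare against; you have simply supplied the routine details the authors omitted.
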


\begin{corollary}
  Each MfBf with $H\in[\thf,1]$ can be represented as the weak limit
  (on each compact subset of $\R^d$) of the sums of $X_i(A_iz)$,
  $i\geq1$, where $\{X_i,i\geq1\}$ are i.i.d. L\'evy fBf's with
  covariance \eqref{eq:1} and $\{A_i,i\geq1\}$ are positive definite
  matrices.
\end{corollary}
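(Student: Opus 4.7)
Let $p = 2H \in [1,2]$. By Proposition~\ref{prop:fbm-exist} and the spectral representation~\eqref{eq:spectral}, $F$ is an $L_p$-ball with
$$\|z\|_F^p = \int_{\Sphere}|\langle z,u\rangle|^p\,\sigma(du)$$
for some finite even measure $\sigma$ on $\Sphere$. The plan is to construct a sequence of finite $p$-sums of ellipsoids whose Minkowski functionals converge pointwise to $\|\cdot\|_F$, and then invoke Propositions~\ref{prop:sum} and~\ref{prop:weak-conv}.

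First I would discretise $\sigma$ into even atomic measures $\sigma_n = \sum_{j=1}^{k_n} c_j^{(n)}\bigl(\delta_{u_j^{(n)}} + \delta_{-u_j^{(n)}}\bigr)/2$ with $\sigma_n(\Sphere)$ uniformly bounded and $\int|\langle z,u\rangle|^p\sigma_n(du)\to\|z\|_F^p$ pointwise in $z$, obtained for instance by partitioning $\Sphere$ into antipodally symmetric Borel cells of vanishing diameter and concentrating the mass of each cell at a representative direction. Each atomic contribution $c_j^{(n)}|\langle z,u_j^{(n)}\rangle|^p$ corresponds to the rank-one quadratic form $u_j^{(n)}(u_j^{(n)})^T$, which is not strictly positive definite. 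To remedy this I would regularise with $\eps_n\downarrow 0$ by setting
$$A_j^{(n)} := (c_j^{(n)})^{1/p}\bigl(u_j^{(n)}(u_j^{(n)})^T + \eps_n I_d\bigr)^{1/2},$$
a symmetric and strictly positive definite matrix. The subadditivity $(a+b)^{p/2} \leq a^{p/2}+b^{p/2}$ valid for $a,b\geq0$ and $p\in(0,2]$ gives
$$0 \leq \|A_j^{(n)} z\|^p - c_j^{(n)}|\langle z,u_j^{(n)}\rangle|^p \leq c_j^{(n)}\eps_n^{p/2}\|z\|^p,$$
so on choosing $\eps_n$ small enough (depending on $\sigma_n(\Sphere)$) the $L_p$-ball $F_n$ with $\|z\|_{F_n}^p = \sum_{j=1}^{k_n}\|A_j^{(n)} z\|^p$ satisfies $\|z\|_{F_n}\to\|z\|_F$ for every $z$.

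Iterated application of Proposition~\ref{prop:sum} identifies $F_n$ as the $p$-sum of the ellipsoids $(A_j^{(n)})^{-1}B_2^d$ and shows that the finite sum
$$Y_n(z) = \sum_{j=1}^{k_n} X_j^{(n)}\bigl(A_j^{(n)} z\bigr),$$
with $\{X_j^{(n)}\}$ i.i.d. L\'evy fBf's, is a version of the MfBf $X_{F_n}$. The uniform bound $\|z\|_{F_n}^p \leq (1+\eps_n)^{p/2}\sigma_n(\Sphere)\|z\|^p$ guarantees $F_n \supset \eps_0 B_2^d$ for some fixed $\eps_0>0$ and all large $n$, so Proposition~\ref{prop:weak-conv} delivers the weak convergence $Y_n\Rightarrow X_F$ on every compact subset of $\R^d$, which is the content of the corollary once the finite families $\{A_j^{(n)}\}_j$ are re-indexed as a single sequence.

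I expect the main technical obstacle to be the tension between the rank-one blocks $uu^T$ naturally emerging from the spectral decomposition and the strict positive-definiteness required of the $A_i$. The $\eps_n I_d$ regularisation circumvents this, but one must couple the discretisation rate and the perturbation rate: both the approximation error $\int|\langle z,u\rangle|^p(\sigma-\sigma_n)(du)$ and the perturbation error $\sigma_n(\Sphere)\eps_n^{p/2}\|z\|^p$ must vanish simultaneously, which forces $\eps_n$ to be chosen, after fixing $\sigma_n$, depending on the total mass $\sigma_n(\Sphere)$.
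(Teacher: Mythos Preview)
Your argument is correct and complete, but it takes a different route from the paper. The paper simply invokes \cite[Th.~6.13]{grin:zhan99}, which states that every $L_p$-ball with $p\geq1$ is a Hausdorff limit of $p$-sums of ellipsoids, and then applies Propositions~\ref{prop:sum} and~\ref{prop:weak-conv} exactly as you do. You instead \emph{construct} the approximating ellipsoids by hand: discretise the spectral measure $\sigma$ into atoms, then perturb the resulting rank-one forms $u_j u_j^T$ by $\eps_n I_d$ to obtain genuinely positive definite $A_j^{(n)}$, with the subadditivity of $t\mapsto t^{p/2}$ controlling the perturbation error. This makes the proof self-contained and avoids the external Grinberg--Zhang reference; as a bonus, your construction does not actually use $p\geq1$ anywhere (subadditivity holds for all $p\leq2$), so it would extend the corollary to all $H\in(0,1]$, whereas the cited theorem is stated only for $p\geq1$. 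The paper's version is shorter precisely because it outsources the approximation step, and its remark that ``at least one summand contains a neighbourhood of the origin'' plays the same role as your uniform bound $\|z\|_{F_n}^p\leq(1+\eps_n)^{p/2}\sigma_n(\Sphere)\|z\|^p$ in verifying the tightness hypothesis of Proposition~\ref{prop:weak-conv}.
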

\begin{proof}
  The result follows from Proposition~\ref{prop:sum} and
  \cite[Th.~6.13]{grin:zhan99} saying that each $L_p$-ball $F$ with
  $p\geq1$ can be represented as the limit (in the Hausdorff metric)
  for the $p$-sum of ellipsoids. The MfBf with the associated star
  body being an ellipsoid can be represented as $X(Az)$, $z\in\R^d$,
  where $X$ is the L\'evy fBf. The finite
  dimensional distributions converge, since the convergence of sets in
  the Hausdorff metric yields the convergence of the corresponding
  norms. Finally, the representation as the sum of ellipsoids
  guarantees that at least one summand contains a neighbourhood of the
  origin, so that Proposition~\ref{prop:weak-conv} applies.
\end{proof}

\subsection{Sub-fractional fields}
\label{sec:sub-fract-fields}

Following the definition of the sub-fractional Brownian motion from
\cite{boj:gor:tal04}, it is possible to define its Minkowski analogue
as the centred Gaussian random field with the covariance
\begin{equation}
  \label{eq:sub-frac}
  C_F^{\mathrm{sub}}(z_1,z_2)
  = \|z_1\|_F^{2H} +\|z_2\|_F^{2H} -\thf[\|z_1+z_2\|_F^{2H}+\|z_1-z_2\|_F^{2H}]\,.
\end{equation}
Since
\begin{displaymath}
  C_F^{\mathrm{sub}}(z_1,z_2)
  =C_F(z_1,z_2)+C_F(z_1,-z_2)\,,
\end{displaymath}
\eqref{eq:sub-frac} defines a valid covariance function if $H\in(0,1]$
and $F$ is an $L_p$-ball. The corresponding random field is given by
$\thf(X_F(z)+X_F(-z))$ for the MfBf $X_F$.

The random field $\tilde{X}_F(z)=X_F(z)-X_F(-z)$, $z\in\R^d$, is a
Gaussian random field with the covariance
$\|z_1+z_2\|^{2H}_F-\|z_1-z_2\|^{2H}_F$, whose univariate version was
considered in \cite{boj:gor:tal07}.

\section{Fractional Poisson fields}
\label{sec:fract-poiss-fields}

\subsection{Definition and scaling property}
\label{sec:defin-scal-prop}

For $H\in(0,\thf)$, let $N_H=\{(x_i,r_i),i\geq1\}$ be the
\emph{Poisson point process} (identified with the corresponding
counting measure) on $\R^d\times(0,\infty)$ with the \emph{intensity
  measure}
\begin{equation}
  \label{eq:nuH}
  \nu_H(dx,dr)=dx\, r^{-d-1+2H}dr\,.
\end{equation}
Let $K$ be a convex body in $\R^d$ with non-empty interior.  

\begin{definition}
  \label{def:fbm}
  The \emph{fractional Poisson field} with Hurst index $H$ and the
  shape parameter $K$ is the random field
  \begin{equation}
    \label{eq:pf}
    \xi(z)=\int_{\R^d\times(0,\infty)}(\ind_{z\in x+rK}-\ind_{0\in x+rK})
    \,N_{H}(dx,dr)\,.
  \end{equation}
  Sometimes we write $\xi_K$ or $\xi_{K,H}$ to emphasise the shape
  parameter of the field and its Hurst exponent.
\end{definition}

The random field \eqref{eq:pf} for $K$ being the unit Euclidean ball
was considered in \cite{bier:dem:est13}.  The factor $\lambda$ in
front of the intensity of $\nu_H$ in \cite{bier:dem:est13} can be
incorporated into Definition~\ref{def:fbm} using a rescaled variant of
$K$.

Note that
\begin{align*}
  \int_{\R^d}|\ind_{z\in x+rK}-\ind_{0\in x+rK}|dx
  &=\Vol_d((z+rK)\symdif rK)\\
  &\leq \min(r^d\Vol_d(K),r^{d-1} b_K(z))\,,
\end{align*}
where $\symdif$ denotes the symmetric difference,
$b_K(z)=\Vol_{d-1}(\pr_{z^\perp} K)$ is the $(d-1)$-dimensional volume
of the projection of $K$ onto the hyperplane $z^\perp$ orthogonal to
$z$. Therefore, the integrand in \eqref{eq:pf} belongs to
$L^1(\R^d\times(0,\infty),\nu_{H})$, so that the integral
\eqref{eq:pf} is well defined. Since the absolute difference of two
indicator functions takes values $0$ or $1$, the integrand in
\eqref{eq:pf} also belongs to
$L^2(\R^d\times(0,\infty),\nu_{H})$ and 
\begin{align*}
  \E \xi(z)^2
  &=  \int_{\R^d\times(0,\infty)}(\ind_{z\in rK+x}-\ind_{0\in rK+x})^2\,\nu_{H}(dx,dr)\\
  &= \int_0^\infty\Vol_d((z+rK)\symdif rK)r^{-d-1+2H}\,dr\,.
\end{align*}
Since $\xi(z_1)-\xi(z_2)$ coincides in distribution with
$\xi(z_1-z_2)$, 
\begin{equation}\label{eq:cov-xi}
  \E[\xi(z_1)\xi(z_2)] =\thf
  \bigl[\E\xi(z_1)^2+\E \xi(z_2)^2
  -\E \xi(z_1-z_2)^2\bigr]\,.
\end{equation}

By computing the probability generating functional (see
\cite{dal:ver08}) of the Poisson process $N_H$, it is easy to see that
the finite-dimensional distributions of $\xi$ have the following
characteristic function
\begin{multline}
  \label{eq:char-func}
  \E\exp\left\{\sum_{j=1}^k\imath t_j\xi_K(z_j)\right\}
  =\exp\Bigg\{\int_{\R^d\times(0,\infty)}
    \bigg(\cos\Big(\sum_{j=1}^k t_j\big(\ind_{z_j\in rK+x}
          \\-\ind_{0\in rK+x}\big)\Big)-1\bigg)r^{-d-1+2H}dx\,dr\Bigg\}
\end{multline}

\begin{lemma}
  \label{lemma:lambda}
  For all $a>0$, the random fields $\xi_K(az)$, $z\in\R^d$, and
  $\xi_{bK}(z)$, $z\in\R^d$, with $b=a^{\frac{2H}{d-2H}}$ have
  identical finite-dimensional distributions. The finite-dimensional
  distributions of $\xi_K(az)$ equal the $a^{2H}$-convolution power of
  those of $\xi_K(z)$.
\end{lemma}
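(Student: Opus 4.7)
The natural strategy is to work at the level of the joint characteristic function given by \eqref{eq:char-func} and to exploit the self-similarity of the intensity measure $\nu_H$ in \eqref{eq:nuH} under coordinated rescalings of $x$ and $r$. Integrability of the integrand in \eqref{eq:pf} with respect to $\nu_H$ was already established before \eqref{eq:cov-xi}, so all substitutions below are legal by the standard change-of-variables formula for Lebesgue integrals.

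I would first prove the second assertion. Starting from \eqref{eq:char-func} applied to $(\xi_K(az_1),\dots,\xi_K(az_k))$, the key observation is that
\begin{displaymath}
  \ind_{az_j\in rK+x}=\ind_{z_j\in (r/a)K+(x/a)},
\end{displaymath}
so substituting $x=ax'$, $r=ar'$ converts every indicator into $\ind_{z_j\in r'K+x'}$. Under this substitution
\begin{displaymath}
  r^{-d-1+2H}\,dx\,dr
  = a^{2H}\,r'^{-d-1+2H}\,dx'\,dr',
\end{displaymath}
since the Jacobian contributes $a^{d+1}$ and the factor $r^{-d-1+2H}$ contributes $a^{-d-1+2H}$. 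Therefore the integral in \eqref{eq:char-func} for $\xi_K(az)$ is exactly $a^{2H}$ times the integral for $\xi_K(z)$, which is precisely the statement that the joint law of $(\xi_K(az_1),\dots,\xi_K(az_k))$ is the $a^{2H}$-convolution power of the joint law of $(\xi_K(z_1),\dots,\xi_K(z_k))$.

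For the first assertion I would perform the analogous computation on $\xi_{bK}$. In the characteristic function of $(\xi_{bK}(z_1),\dots,\xi_{bK}(z_k))$, I substitute $r=r'/b$ (leaving $x$ untouched), which turns $r(bK)=r'K$ and hence $\ind_{z_j\in r(bK)+x}=\ind_{z_j\in r'K+x}$. The measure transforms as
\begin{displaymath}
  r^{-d-1+2H}\,dx\,dr = b^{d-2H}\,r'^{-d-1+2H}\,dx\,dr',
\end{displaymath}
so the joint characteristic function of $\xi_{bK}$ is the $b^{d-2H}$-th power of that of $\xi_K$. Combined with the first step, the identity in distribution $\xi_K(az)\stackrel{\mathrm{fdd}}{=}\xi_{bK}(z)$ holds precisely when $a^{2H}=b^{d-2H}$, that is, $b=a^{2H/(d-2H)}$, as claimed.

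There is no real obstacle here beyond bookkeeping of exponents in the Jacobian; the only point to be mindful of is that $H<\tfrac{1}{2}\leq d/2$ guarantees $d-2H>0$, so the exponent $2H/(d-2H)$ defining $b$ is a well-defined positive number and the substitutions preserve $(0,\infty)$.
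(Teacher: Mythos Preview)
Your proof is correct and follows essentially the same route as the paper: both arguments exploit the scaling homogeneity of the intensity measure $\nu_H$ under the substitution $r\mapsto r/b$ (and, for the time change, $(x,r)\mapsto (x/a,r/a)$). The paper phrases this at the level of the Poisson point process---transforming $(x_i,r_i)\mapsto(x_i,br_i)$ and reading off that the resulting intensity is $b^{d-2H}\nu_H=a^{2H}\nu_H$, then invoking superposition---whereas you perform the identical change of variables directly inside the characteristic function \eqref{eq:char-func}; the bookkeeping and the conclusion are the same.
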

\begin{proof}
  The fractional Poisson field with the shape parameter $bK$ equals the
  fractional Poisson field with the shape parameter $K$ generated by the
  point process $\{(x_i,br_i):\; (x_i,r_i)\in N_H\}$. The intensity
  measure of this transformed process is
  \begin{displaymath}
    \E \sum_i \ind_{x_i\in D, br_i\geq t}=\nu_H(D\times
    [b^{-1}t,\infty))
    =a^{2H}\nu_H(D\times [t,\infty))
  \end{displaymath}
  for all Borel $D\subset\R^d$ and $t>0$. Thus, $\xi_{bK}$ equals in
  distribution the fractional Poisson field with the shape parameter
  $K$ and the intensity measure $a^{2H}\nu_H$. This corresponds to a
  superposition of independent Poisson processes and so to the
  convolution power of the distribution.
\end{proof}

\subsection{Relation to the MfBf}
\label{sec:relation-mfbf}

It is shown in \cite{bier:dem:est13} that, if $K$ is the unit
Euclidean ball, the covariance \eqref{eq:cov-xi} of the fractional
Poisson field $\xi$ coincides (up to a multiplicative constant) with
the covariance function \eqref{eq:1} of the L\'evy fBf.  The case of a
general convex body $K$ cannot any longer be handled by the rotational
symmetry argument as in \cite{bier:dem:est13} and, for this, we need
to recall some further concepts from convex geometry.  If $K$ is a
convex body in $\R^d$, then its \emph{radial $p$th mean body} $R_pK$
is defined for $p>-1$ by
\begin{displaymath}
  \|u\|_{R_pK}=\left(\frac{1}{\Vol_d(K)}
    \int_K\rho_K(x,u)^pdx\right)^{-1/p}\,,
\end{displaymath}
where $\rho_K(x,u)=\max\{t:\: x+tu\in K\}$ is the representation of
the boundary of $K$ in the spherical coordinates with the origin
located at $x$, see \cite{gar:zhan98}.

\begin{theorem}
  \label{thr:prm}
  The covariance function of the fractional Poisson field $\xi$ given
  by \eqref{eq:pf} coincides with that of MfBf $X$ with the
  associated star body
  \begin{equation}
    \label{eq:frp}
    F=\left(\frac{H}{\Vol_d(K)}\right)^{1/2H} R_{-2H} K\,.
  \end{equation}
\end{theorem}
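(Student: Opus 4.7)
The plan is to reduce everything to a variance computation. By the polarisation identity \eqref{eq:cov-xi} and Definition~\ref{def:f-fbm}, the covariance function of the Poisson field coincides with that of the MfBf $X_F$ as soon as one verifies the pointwise identity
\begin{displaymath}
  \E\xi(z)^2 = \|z\|_F^{2H}, \qquad z\in\R^d.
\end{displaymath}
Unpacking the right-hand side: the scaling $F = (H/\Vol_d(K))^{1/(2H)} R_{-2H}K$ combined with the definition of the radial $(-2H)$th mean body (with $p=-2H\in(-1,0)$) gives, after a routine manipulation of exponents,
\begin{displaymath}
  \|z\|_F^{2H} = \frac{1}{H}\int_K \rho_K(x,z)^{-2H}\,dx\,.
\end{displaymath}
So the task is to show that $\E\xi(z)^2$ admits the same integral representation.

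For the left-hand side, I would start from the formula $\E\xi(z)^2 = \int_0^\infty \Vol_d((z+rK)\symdif rK)\,r^{-d-1+2H}\,dr$ already recorded in the paper. Since $\Vol_d(z+rK)=\Vol_d(rK)$, one has $\Vol_d((z+rK)\symdif rK)=2\Vol_d(rK\setminus(z+rK))$, and substituting $y=rx$ rewrites this as
\begin{displaymath}
  \Vol_d(rK\setminus(z+rK)) = r^d\,\Vol_d\{x\in K:\; x-z/r\notin K\}\,.
\end{displaymath}
The key geometric observation is that for $x$ in the interior of $K$, the condition $x-z/r\notin K$ is exactly $\rho_K(x,-z)<1/r$, because $\rho_K(x,-z)$ is the maximal $s\geq 0$ with $x-sz\in K$. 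Fubini then converts the iterated integral, and the elementary computation $2\int_0^{1/\rho_K(x,-z)} r^{2H-1}\,dr = H^{-1}\rho_K(x,-z)^{-2H}$ (valid because $2H>0$, with the outer integrability over $K$ ensured by $2H<1$) yields
\begin{displaymath}
  \E\xi(z)^2 = \frac{1}{H}\int_K \rho_K(x,-z)^{-2H}\,dx\,.
\end{displaymath}

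The only remaining issue, and the main obstacle, is that $K$ is not assumed to be centrally symmetric, so $\rho_K(x,-z)$ and $\rho_K(x,z)$ differ pointwise. To replace one by the other under the $x$-integral, I would either translate both sets in $\Vol_d((-z+rK)\symdif rK)$ by $z$ to observe that $\E\xi(z)^2 = \E\xi(-z)^2$ (which already forces the two integrals to agree), or, more directly, decompose $K$ into chords parallel to $z$: on a chord of length $L$, parameterised by $s\in[0,L]$, one has $\rho_K(x,z)\propto L-s$ and $\rho_K(x,-z)\propto s$, so the substitution $s\mapsto L-s$ gives $\int_0^L s^{-2H}\,ds=\int_0^L (L-s)^{-2H}\,ds$, and a final application of Fubini produces the required equality. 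Combining the two displays identifies $\E\xi(z)^2$ with $\|z\|_F^{2H}$, and feeding this into \eqref{eq:cov-xi} reproduces the covariance $C_F(z_1,z_2)$ of \eqref{eq:cov-F}, completing the proof.
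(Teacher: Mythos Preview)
Your proof is correct and takes a genuinely different route from the paper's. The paper slices the symmetric difference into chords parallel to $z$, obtaining $\Vol_d((sz+K)\symdif K)=2\int_{z^\perp}\min(s,\ell_{z,K}(y))\,dy$, integrates in $s$ to get $\frac{1}{H(1-2H)}\int_{z^\perp}\ell_{z,K}(y)^{1-2H}\,dy$, and then invokes the Gardner--Zhang identity $\int_K\rho_K(x,u)^p\,dx=\frac{1}{p+1}\int_{u^\perp}\ell_{u,K}(y)^{p+1}\,dy$ to convert this into $\frac{1}{H}\Vol_d(K)\,\|z\|_{R_{-2H}K}^{2H}$. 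You bypass the projection and the external lemma entirely: you rewrite the symmetric-difference volume as $2r^d$ times the measure of the sub-level set $\{x\in K:\rho_K(x,-z)<1/r\}$ and apply Fubini to land directly on $\frac{1}{H}\int_K\rho_K(x,-z)^{-2H}\,dx$. The price is the extra symmetry step $\int_K\rho_K(x,-z)^{-2H}\,dx=\int_K\rho_K(x,z)^{-2H}\,dx$, which you dispatch cleanly via $\E\xi(z)^2=\E\xi(-z)^2$ (or via the chord substitution $s\mapsto L-s$, which is in fact the heart of the Gardner--Zhang proof). Your argument is more self-contained; the paper's has the advantage of isolating the chord-length integral $\int_{z^\perp}\ell_{z,K}(y)^{1-2H}\,dy$, which is reused later in the $H\uparrow\tfrac12$ limit theorems.
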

\begin{proof}
  Let $u^\perp$ denote the linear space orthogonal to the non-trivial
  vector $u\in\R^d$, and let $\ell_{u,K}(y)$ be the length of the
  segment obtained as the intersection of $K$ with the line
  $\{y+tu:\; t\in\R\}$. Then
  \begin{displaymath}
    \Vol_d((u+K)\symdif K)=
    2\int_{u^\perp} \min(\|u\|,\ell_{u,K}(y))dy\,.
  \end{displaymath}
  If $\|z\|=1$, then
  \begin{equation}\label{eq:var-xi}
  \begin{split}
    \E \xi(z)^2&=\int_0^\infty\Vol_d((z+rK)\symdif rK)r^{-d-1+2H}\,dr\\
    &=\int_0^\infty \Vol_d((zs+K)\symdif K)s^{-1-2H}ds\\
    &=2\int_0^\infty \int_{z^\perp} \min(s,\ell_{z,K}(y))s^{-1-2H}dyds\\
    &=\frac{1}{H(1-2H)} \int_{z^\perp} \ell_{z,K}(y)^{-2H+1}dy\,.
  \end{split}
  \end{equation}
  It is shown in \cite[Lemma~2.1]{gar:zhan98} that, for $p>-1$,
  \begin{displaymath}
    \int_K\rho_K(x,u)^pdx=\frac{1}{p+1}\int_{u^\perp} \ell_{u,K}(y)^{p+1}dy\,.
  \end{displaymath}
  Thus, for $p=-2H$, we have
  \begin{align*}
    \E \xi(z)^2&=
    \frac{1}{H(1-2H)}(-2H+1) \int_K\rho_K(x,z)^{-2H} dx\\
    &=\frac{1}{H} \Vol_d(K) \|z\|_{R_{-2H}K}^{2H}\,.
  \end{align*}
  It remains to note that $\E \xi(tz)^2=t^{2H}\E \xi(z)^2$ for $t>0$.
\end{proof}

\begin{corollary}
  The radial $p$th mean body $R_pK$ of any convex body $K$ is an
  $L_p$-ball for each $p \in(-1,0)$.
\end{corollary}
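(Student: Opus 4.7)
The plan is to read the corollary off from Theorem~\ref{thr:prm} together with Proposition~\ref{prop:fbm-exist}. Fix $p\in(-1,0)$, set $H=-p/2\in(0,\thf)$, and let $K$ be any convex body in $\R^d$ with non-empty interior. The preliminary calculations of Section~\ref{sec:fract-poiss-fields} guarantee that the fractional Poisson field $\xi_K$ of Definition~\ref{def:fbm} with this Hurst index and shape parameter $K$ is a well-defined square-integrable random field, and in particular its covariance function is a bona fide positive definite function on $\R^d\times\R^d$.

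Next, Theorem~\ref{thr:prm} identifies this covariance with the covariance $C_F$ of an MfBf whose associated star body is
\[
  F=\left(\frac{H}{\Vol_d(K)}\right)^{1/2H} R_{-2H}K
   =\left(\frac{H}{\Vol_d(K)}\right)^{1/2H} R_{p}K.
\]
Since $C_F$ is positive definite and of the form \eqref{eq:cov-F} with exponent $2H=-p$, Proposition~\ref{prop:fbm-exist} forces $F$ to be an $L_{-p}$-ball; equivalently, $\|\cdot\|_F^{-p}$ admits the spectral representation \eqref{eq:spectral} against some finite even measure on $\Sphere$.

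Finally, I would observe that the class of $L_q$-balls is stable under positive dilations: this is immediate from \eqref{eq:spectral} combined with the identity $\|u\|_{cF}=c^{-1}\|u\|_F$, which allows one to absorb the factor $c^{-q}$ into the spectral measure. Dividing the $F$ above by the positive constant $(H/\Vol_d(K))^{1/2H}$ therefore shows that $R_pK$ itself is an $L_{-p}$-ball, which is the content of the corollary (the sign convention in its statement being interpreted as $L_{|p|}$). There is no serious obstacle in this argument; the entire proof is a reinterpretation of the variance computation \eqref{eq:var-xi} already carried out in the proof of Theorem~\ref{thr:prm}, and the only routine check is the dilation invariance of the $L_q$-ball property.
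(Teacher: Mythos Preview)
Your proof is correct and is exactly the argument the paper intends: the corollary is stated without proof immediately after Theorem~\ref{thr:prm}, and the intended reasoning is precisely the combination of Theorem~\ref{thr:prm} with the ``only if'' direction of Proposition~\ref{prop:fbm-exist}, together with the trivial dilation invariance you spell out. Your remark about the sign convention (reading $L_p$ as $L_{|p|}$) is also the correct interpretation of the statement.
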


It is not known if the radial $p$th mean body is convex for
$p\in(-1,0)$ and if two different (up to a translation) convex bodies
share the same radial $p$th mean body for any single $p\in(-1,0)$, see
\cite{gar:zhan98}. 
An inverse to the transform $R_p$ is not yet found.

Below we present a limit theorem that yields the MfBf as a limit
when the intensity of the fractional Poisson field $\xi$ grows and its
argument is rescaled.

\begin{proposition}
  \label{prop:fidi-conv}
  The finite-dimensional distributions of $a^{-H}\xi_K(az)$ converge
  as $a\to\infty$ to those of the MfBf $X_F(z)$, $z\in\R^d$, with the
  associated star body $F$ given by
  \eqref{eq:frp}. Furthermore, $a^{-H}\int_L\xi_K(az)\,dz$
  converges in distribution as $a\to\infty$ to $\int_LX_F(z)\,dz$ for
  each bounded Borel set $L$.
\end{proposition}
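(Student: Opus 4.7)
The plan is to run a Poisson central limit theorem directly through the characteristic function \eqref{eq:char-func}. By the Cram\'er--Wold device, both statements follow from convergence of the joint characteristic function of
\[
\bigl(a^{-H}\xi_K(az_1),\ldots,a^{-H}\xi_K(az_k),a^{-H}\textstyle\int_L\xi_K(az)dz\bigr)
\]
for arbitrary $z_1,\ldots,z_k\in\R^d$ and real dual variables $t_1,\ldots,t_k,s$, so the two parts are treated together.

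First, apply \eqref{eq:char-func} after interchanging $\int_L dz$ with the Poisson stochastic integral via Fubini; this is justified because for each fixed $a$,
\[
\int_L\int_{\R^d\times(0,\infty)}|\ind_{az\in x+rK}-\ind_{0\in x+rK}|\,\nu_H(dx,dr)\,dz=\int_L\E\xi_K(az)^2\,dz<\infty,
\]
since $L$ is bounded. Next substitute $(x,r)\mapsto(ax,ar)$, noting $\ind_{az\in ax+arK}=\ind_{z\in x+rK}$ and that the Jacobian $a^d\cdot a\cdot a^{-d-1+2H}=a^{2H}$ rescales $\nu_H$ by the factor $a^{2H}$. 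The resulting characteristic exponent is
\[
a^{2H}\int_{\R^d\times(0,\infty)}\bigl(\cos(a^{-H}\Psi(x,r))-1\bigr)\,\nu_H(dx,dr),
\]
where
\[
\Psi(x,r)=\sum_{j=1}^k t_j(\ind_{z_j\in x+rK}-\ind_{0\in x+rK})+s\int_L(\ind_{z\in x+rK}-\ind_{0\in x+rK})dz.
\]

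Second, invoke dominated convergence. The global bound $|\cos u-1|\leq u^2/2$ gives the $a$-independent majorant $a^{2H}|\cos(a^{-H}\Psi)-1|\leq\Psi^2/2$, and $\Psi^2\in L^1(\nu_H)$ since each summand of $\Psi$ lies in $L^2(\nu_H)$: the $j$-th term contributes $\E\xi_K(z_j)^2$, while the integral-over-$L$ term is controlled by Cauchy--Schwarz together with $\int_L\E\xi_K(z)^2dz<\infty$. The pointwise limit $a^{2H}(\cos(a^{-H}\Psi)-1)\to-\Psi^2/2$ then yields a centred Gaussian characteristic function of variance $\int\Psi^2\,d\nu_H$. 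Expanding the square and exchanging the $\nu_H$-integral with $\int_L dz$, Theorem~\ref{thr:prm} together with \eqref{eq:cov-xi} identifies this variance as
\[
\sum_{i,j=1}^k t_it_j C_F(z_i,z_j)+2s\sum_{j=1}^k t_j\int_L C_F(z_j,z)dz+s^2\int_L\int_L C_F(z,z')dz\,dz',
\]
which is exactly the variance of the Gaussian linear functional $\sum_j t_j X_F(z_j)+s\int_L X_F(z)dz$.

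The only real technical point is uniform-in-$a$ domination; once the substitution has removed $a$ from the arguments of the indicators, the $L^2(\nu_H)$ bounds reduce by Cauchy--Schwarz to the finiteness of $\E\xi_K(z)^2$ already established in Section~\ref{sec:defin-scal-prop}, which is uniform for $z$ in the compact set $L$. No new convex-geometric input is needed beyond Theorem~\ref{thr:prm}.
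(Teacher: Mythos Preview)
Your proof is correct and somewhat more explicit than the paper's. The paper proceeds differently: it invokes Lemma~\ref{lemma:lambda} to identify the finite-dimensional distributions of $\xi_K(az)$ with the $a^{2H}$-fold convolution power of those of $\xi_K(z)$, so that when $a^{2H}=m$ is an integer, $a^{-H}\xi_K(az)$ is literally a normalised sum of $m$ i.i.d.\ copies of $\xi_K$ and the classical CLT applies; a ``standard argument'' then fills in non-integer $a^{2H}$. The integral statement is dispatched in one line by the same CLT together with equality of variances.

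Your route computes the characteristic function directly: the substitution $(x,r)\mapsto(ax,ar)$ is exactly the scaling relation underlying Lemma~\ref{lemma:lambda}, but you exploit it analytically rather than structurally, and dominated convergence with the majorant $\Psi^2/2$ replaces the appeal to the CLT. This buys you a uniform treatment of all $a\to\infty$ without the integer subsequence step, and it handles the $\int_L$ part on the same footing as the point evaluations, whereas the paper's argument for that part is rather terse. Conversely, the paper's approach makes the infinite-divisibility of $\xi_K$ transparent and needs no estimate beyond the second moment.

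One small point: formula~\eqref{eq:char-func} is stated for finite linear combinations of point evaluations, so strictly you are extending it to include the integrand $\int_L(\ind_{z\in x+rK}-\ind_{0\in x+rK})dz$; this is legitimate because the latter lies in $L^1\cap L^2(\nu_H)$ by your Fubini bound, and the L\'evy--Khintchine formula for Poisson integrals then applies. If one works with $e^{i\Psi}-1$ rather than $\cos\Psi-1$, the linear term integrates to zero since $\int f_z\,d\nu_H=0$ for every $z$, so the domination argument goes through unchanged.
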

\begin{proof}
  By Lemma~\ref{lemma:lambda}, if $a^{2H}=m$ is an integer, then
  $a^{-H}\xi(az)$ is the sum of $m$ i.\,i.\,d.\ copies of $\xi(z)$
  normalised by $\sqrt{m}$. By the central limit theorem, it converges
  to the Gaussian random field that shares the same covariance
  structure with $\xi$, so the MfBf $X$.  A standard argument
  completes the proof of convergence in distribution along an
  arbitrary sequence $a\to\infty$. The weak convergence of integrals
  follows from the central limit theorem and the fact that the
  variances of $\int_L\xi(z)\,dz$ and $\int_L X_F(z)\,dz$ coincide.
\end{proof}

\section{Convergence to MfBf with $H=\thf$}
\label{sec:conv-mfbf-with}

The results from Section~\ref{sec:relation-mfbf} concern the case of
the Hurst parameter $H\in(0,\thf)$. Below we explore the convergence
of the Poisson fractional field to the MfBf with $H=\thf$.

The function $b_K(u)=\Vol_{d-1}(\pr_{u^\perp} K)$, $u\in\Sphere$, is
the support function of the projection body $\Pi K$ to $K$. The
corresponding polar body is denoted by $\Pi^*K$ and called the
\emph{polar projection body}, see \cite[p.~570]{schn2}, so that
$\|u\|_{\Pi^*K}=b_K(u)$ for $u\in\Sphere$.  It is shown in
\cite{gar:zhan98} that the polar projection body $\Pi^*K$ can be
obtained as the limit of $((p+1)\Vol_d(K))^{-1/p} R_pK$ as
$p\downarrow -1$.

\begin{theorem}
  \label{thr:conv1}
  The finite-dimensional distributions of the random field\linebreak
  $\sqrt{1-2H}\,\xi_{K,H}(z)$, $z\in\R^d$, converge as $H\uparrow\thf$
  to the finite-dimensional distributions of the MfBf $X_F(z)$,
  $z\in\R^d$, with the Hurst parameter $H=\thf$ and the associated
  star body $F=\thf\Pi^*K$.
\end{theorem}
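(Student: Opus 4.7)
My plan is to apply L\'evy's continuity theorem to the characteristic functions. Setting $u(x,r):=\sum_{j=1}^{k}t_j(\ind_{z_j\in x+rK}-\ind_{0\in x+rK})$, the log-characteristic function of $\sqrt{1-2H}\,(\xi_{K,H}(z_1),\dots,\xi_{K,H}(z_k))$ is, by \eqref{eq:char-func},
\[
\Psi_H(t)=\int_{\R^d\times(0,\infty)}\bigl(e^{i\sqrt{1-2H}\,u(x,r)}-1\bigr)\,\nu_H(dx,dr).
\]
I would Taylor-expand $e^{iv}-1=iv-v^2/2+R(v)$ with $|R(v)|\le|v|^3/6$ and treat the three pieces separately. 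The linear piece vanishes because $\int u\,d\nu_H=0$: for each fixed $r$ the $dx$-integrals of both indicators equal $r^d\Vol_d(K)$ and so cancel. The quadratic piece collapses to $-\thf\sum_{i,j}t_it_j\,(1-2H)\,\E[\xi_{K,H}(z_i)\xi_{K,H}(z_j)]$.

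The heart of the argument is then the pointwise variance limit $(1-2H)\,\E\xi_{K,H}(z)^2\to\|z\|_{\thf\Pi^*K}$. Extending \eqref{eq:var-xi} to arbitrary $z$ (only the last display there used $\|z\|=1$; for $z=\lambda v$ with $v\in\Sphere$ the substitution $r\mapsto\lambda r$ produces an extra factor $\lambda^{2H}$) gives
\[
\E\xi_{K,H}(z)^2=\frac{\|z\|^{2H}}{H(1-2H)}\int_{z^\perp}\ell_{z,K}(y)^{1-2H}\,dy.
\]
The function $\ell_{z,K}(y)$ is bounded on the bounded projection $\pr_{z^\perp}K$ and vanishes outside it, so dominated convergence (majorant $\max(1,\ell_{z,K}(y))\ind_{\pr_{z^\perp}K}(y)$) yields $\int_{z^\perp}\ell_{z,K}(y)^{1-2H}dy\to \Vol_{d-1}(\pr_{z^\perp}K)=b_K(z/\|z\|)=\|z\|_{\Pi^*K}/\|z\|$. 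Combining with $\|z\|^{2H}\to\|z\|$ and $1/H\to 2$ produces $2\|z\|_{\Pi^*K}=\|z\|_{\thf\Pi^*K}$. Polarisation through \eqref{eq:cov-xi} then extends this to every rescaled covariance, so the quadratic piece of $\Psi_H(t)$ converges to $-\thf\sum_{i,j}t_it_j\,C_F(z_i,z_j)$ with $F=\thf\Pi^*K$ at $H=\thf$.

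For the remainder, since $|u|\le\sum|t_j|=:M$ uniformly, $|R(\sqrt{1-2H}\,u)|\le\tfrac{M}{6}(1-2H)^{3/2}u^2$, so its $\nu_H$-integral is $O((1-2H)^{1/2})$ by the second-moment bound just established, and hence negligible. Summing the three contributions, $\Psi_H(t)$ converges to the log-characteristic function of the centred Gaussian with covariance $C_F$; L\'evy's theorem concludes. The main obstacle is the variance limit, which is the singular degeneration of the radial $(-2H)$th mean body into the polar projection body $\Pi^*K$ as $H\uparrow\thf$; the section-length dominated convergence above is the most direct route and sidesteps the need to invoke the general limit statement from \cite{gar:zhan98}.
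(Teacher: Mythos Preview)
Your proof is correct and follows essentially the same route as the paper: Taylor-expand the log-characteristic function, show the quadratic piece converges via the variance limit $(1-2H)\,\E\xi_{K,H}(z)^2\to 2\|z\|_{\Pi^*K}$, and control the cubic remainder. Your remainder estimate $|u|^3\le M\,u^2$ is a bit more direct than the paper's H\"older step on the triple product, and you dispose of the linear term explicitly (the paper works from \eqref{eq:char-func}, which already has the cosine), but the architecture is identical.
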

\begin{proof}
  Using~\eqref{eq:var-xi}, we have for $\|z\|=1$
  \[
  (1-2H)\E \xi_{K,H}(z)^2
  =\frac{1}{H}\int_{z^\perp}\ell_{z,K}(y)^{-2H+1}dy\,.
  \]
  For $H=\thf$, the integral equals $\|z\|_{\Pi^*K}$, see
  \cite{gar:zhan98}.  Similarly to the proof of Theorem~\ref{thr:prm},
  we obtain for arbitrary $z\in\R^d$ the convergence
  \[
  (1-2H)\E \xi_{K,H}(z)^2\to\|z\|_F \quad\text{as } H\uparrow\thf.
  \]
  By~\eqref{eq:char-func},
  \begin{multline*}
    \E\exp\left\{\sum_{j=1}^k\imath t_j\sqrt{1-2H}\,\xi_{K,H}(z_j)\right\}\\
    =\exp\left\{\int_{\R^d\times(0,\infty)}
      \left(\cos\theta-1\right)r^{-d-1+2H}dx\,dr\right\}
    =\exp\{I_1+I_2\},
  \end{multline*}
  where
  \begin{align*}
    \theta&=\sqrt{1-2H}\sum_{j=1}^k t_j\left(\ind_{z_j\in rK+x}
      -\ind_{0\in rK+x}\right),\\
    I_1&=-\thf\int_{\R^d\times(0,\infty)}\theta^2r^{-d-1+2H}dx\,dr,\\
    I_2&=\int_{\R^d\times(0,\infty)}
    \left(\cos\theta-1+\frac{\theta^2}{2}\right)r^{-d-1+2H}dx\,dr.
  \end{align*}
  Then 
  \begin{align*}
    I_1&=-\frac{1-2H}{2}\int_{\R^d\times(0,\infty)}
    \sum_{j,k}t_jt_k\left(\ind_{z_j\in rK+x}-\ind_{0\in rK+x}\right)\\
    &\quad\times\left(\ind_{z_k\in rK+x}-\ind_{0\in rK+x}\right)r^{-d-1+2H}dx\,dr\\
    &=-\frac{1-2H}{2}\sum_{j,k}t_jt_k\E\left[\xi_{K,H}(z_j)\xi_{K,H}(z_k)\right]\\
    &\to-\frac14\sum_{j,k}t_jt_k\left(\|z_j\|_F+\|z_k\|_F-\|z_j-z_k\|_F\right)
    \qquad \text{as } H\uparrow\thf.
  \end{align*}
  The elementary inequality
  $\left|\cos\theta-1+\frac{\theta^2}{2}\right|\le\frac{|\theta|^3}{6}$
  yields that
  \begin{align*}
    |I_2|&\le\frac16(1-2H)^{3/2}\sum_{i,j,k}|t_it_jt_k|\int_{\R^d\times(0,\infty)}
    \bigl|\left(\ind_{z_i\in rK+x}-\ind_{0\in rK+x}\right)\\
    &\quad\times
    \left(\ind_{z_j\in rK+x}-\ind_{0\in rK+x}\right)
    \left(\ind_{z_k\in rK+x}-\ind_{0\in rK+x}\right)\bigr|
    r^{-d-1+2H}dx\,dr.
  \end{align*}
  The H\"older inequality implies that
  \[
  |I_2|\le\frac{\sqrt{1-2H}}{6}\sum_{i,j,k}|t_it_jt_k|
  \sqrt[3]{\mu_{H}(z_i)\mu_{H}(z_j)\mu_{H}(z_k)},
  \]
  where
  \begin{align*}
    \mu_{H}(z)&=(1-2H)\int_{\R^d\times(0,\infty)}
    \left|\ind_{z\in rK+x}-\ind_{0\in rK+x}\right|^3 r^{-d-1+2H}
    dx\,dr\\
    &=(1-2H)\E\xi_{K,H}(z)^2 \to\|z\|_F \quad \text{as } H\uparrow\thf
  \end{align*}
  for all $z\in\R^d$.
  Thus, 
  $|I_2|\to0$ as $H\uparrow\thf$.
\end{proof}

The integral \eqref{eq:pf} fails to converge if $H\geq \thf$. It is
possible to truncate it to ensure the convergence as follows.  For
$C>0$ and $p>\thf$, define
\[
\eta_{C,p}(z)=\int_{\R^d\times [0,C]}(\ind_{z\in x+rK}-\ind_{0\in x+rK})
\,N_p(dx,dr)\,,
\]
where $N_p$ is the Poisson process with intensity $\nu_H$ from
\eqref{eq:nuH} for $H=p$.  It is easy to see that $\eta_{C,p}$ is well
defined. The following result shows that its normalised version
converges to the MfBf with $H=\thf$ no matter what $p$ is.

\begin{theorem}
  For any $p>\thf$, the finite-dimensional distributions of the random
  field $C^{1/2-p}\eta_{C,p}(z)$, $z\in\R^d$, converge as $C\to\infty$
  to the finite-dimensional distributions of the MfBf with the Hurst
  parameter $H=\thf$ and the associated star body
  $F=\left(p-\thf\right)\Pi^*K$.
\end{theorem}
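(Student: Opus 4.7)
The plan is to mirror the proof of Theorem~\ref{thr:conv1}, with the large parameter $C$ now playing the role that the small quantity $1-2H$ played there. By Campbell's formula for the Poisson process $N_p$,
\begin{equation*}
\E\exp\Bigl\{\sum_{j=1}^k \imath t_j C^{1/2-p}\eta_{C,p}(z_j)\Bigr\}=\exp\{I_1+I_2\},
\end{equation*}
where, writing $\theta=C^{1/2-p}\sum_j t_j(\ind_{z_j\in rK+x}-\ind_{0\in rK+x})$,
\begin{equation*}
I_1=-\tfrac12\int_{\R^d\times[0,C]}\theta^2 r^{-d-1+2p}dx\,dr,\quad
I_2=\int_{\R^d\times[0,C]}\bigl(\cos\theta-1+\tfrac{\theta^2}{2}\bigr)r^{-d-1+2p}dx\,dr.
\end{equation*}

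The heart of the argument is the asymptotic evaluation of $\E\eta_{C,p}(z)^2$ as $C\to\infty$. I would run the length-profile computation from the proof of Theorem~\ref{thr:prm} --- writing $\Vol_d((z+rK)\symdif rK)=2\int_{z^\perp}\min(\|z\|,\ell_{z,rK}(y))\,dy$ and interchanging the order of integration --- while keeping the upper bound $r=C$ in place. For $p>\thf$ the integrand is integrable near $r=0$, and the divergence that would occur on $[0,\infty)$ comes from large $r$, where the symmetric-difference volume behaves like $2r^{d-1}b_K(z)$. Isolating the leading contribution yields
\begin{equation*}
C^{1-2p}\E\eta_{C,p}(z)^2\longrightarrow\frac{b_K(z)}{p-\thf}=\|z\|_F,
\end{equation*}
where $b_K$ is the $1$-homogeneous extension of $\Vol_{d-1}(\pr_{u^\perp}K)$ and $F=(p-\thf)\Pi^*K$. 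Translation invariance of the spatial factor of $\nu_H$ ensures that $\eta_{C,p}$ retains stationary increments, so the polarisation identity~\eqref{eq:cov-xi} remains valid and
\begin{equation*}
I_1\longrightarrow-\tfrac14\sum_{j,k}t_jt_k\bigl(\|z_j\|_F+\|z_k\|_F-\|z_j-z_k\|_F\bigr),
\end{equation*}
which is precisely the logarithm of the characteristic function of the finite-dimensional marginals of $X_F$ at $H=\thf$.

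For the remainder $I_2$ I would argue exactly as in the proof of Theorem~\ref{thr:conv1}: the elementary inequality $|\cos\theta-1+\theta^2/2|\le|\theta|^3/6$ together with H\"older's inequality against the measure $r^{-d-1+2p}\,dx\,dr$ on $\R^d\times[0,C]$ controls the resulting triple integrals. Since absolute differences of indicator functions satisfy $|f|^3=f^2$, each $L^3$-factor reduces to $\E\eta_{C,p}(z_\ell)^2=O(C^{2p-1})$. Combining the prefactor $C^{3(1/2-p)}$ arising from the three $C^{1/2-p}$ factors inside $\theta^3$ with the product of the three variance factors gives $|I_2|=O(C^{1/2-p})\to 0$ as $C\to\infty$, since $p>\thf$.

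The main obstacle is the asymptotic identification $\E\eta_{C,p}(z)^2\sim\frac{2}{2p-1}b_K(z)C^{2p-1}$ together with the recognition of its coefficient as the Minkowski functional of $(p-\thf)\Pi^*K$; in particular one has to argue that the leading-order constant is exactly the right one so that the limit star body matches the formula in the statement. Once this asymptotics is in hand, everything else transfers directly from the proof of Theorem~\ref{thr:conv1} with the roles of small $1-2H$ and large $C$ interchanged.
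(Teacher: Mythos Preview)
Your proposal is correct and follows essentially the same route as the paper: reduce to the variance asymptotics $C^{1-2p}\E\eta_{C,p}(z)^2\to\|z\|_F$, then handle $I_1$ via stationary increments and the polarisation identity, and kill $I_2$ by the cubic Taylor bound, H\"older, and the identity $|f|^3=f^2$ for differences of indicators. The paper supplies the detail you flag as the ``main obstacle'': after the change of variables $s=1/r$ it splits the $s$--integral according to whether $\ell_{z,K}(y)<1/C$ or $\ell_{z,K}(y)\ge 1/C$, obtaining three explicit terms of which only the middle one survives and equals $\tfrac{2}{2p-1}\|z\|_{\Pi^*K}$; the third term is handled by a further $1/\sqrt{C}$ cutoff. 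The extension from $\|z\|=1$ to general $z$ is done via the scaling $\E\eta_{C,p}(tz)^2=t^{2p}\E\eta_{C/t,p}(z)^2$, which you should also record since homogeneity of the \emph{truncated} variance is not immediate.
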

\begin{proof}
  First let us show that
  \begin{equation}
    \label{eq:var-eta}
    C^{1-2p}\,\E\eta_{C,p}(z)^2\to\|z\|_F\,.
  \end{equation}
  Let $\|z\|=1$.  Similarly to~\eqref{eq:var-xi},
  \[
  C^{1-2p}\,\E\eta_{C,p}(z)^2
  =2C^{1-2p}\int_{1/C}^\infty \int_{z^\perp} \min(s,\ell_{z,K}(y))s^{-1-2p}dyds.
  \]
  Further,
  \begin{align*}
    C^{1-2p}\,&\E\eta_{C,p}(z)^2
    =2C^{1-2p}\int_{z^\perp}\int_{1/C}^\infty
    \ell_{z,K}(y)s^{-1-2p}\ind_{\ell_{z,K}(y)<\frac1C}dsdy\\
    &\qquad+2C^{1-2p}\int_{z^\perp}\int_{1/C}^{\ell_{z,K}(y)}s^{-2p}
    \ind_{\ell_{z,K}(y)\ge\frac1C}dsdy\\
    &\qquad+2C^{1-2p}\int_{z^\perp}\int_{\ell_{z,K}(y)}^\infty
    \ell_{z,K}(y)s^{-1-2p}\ind_{\ell_{z,K}(y)\ge\frac1C}dsdy\\
    &=\frac{C}{p}\int_{z^\perp}\ell_{z,K}(y)\ind_{\ell_{z,K}(y)<\frac1C}dy
    +\frac{2}{2p-1}\int_{z^\perp}\ind_{\ell_{z,K}(y)\ge\frac1C}dy\\
    &\qquad+\frac{C^{1-2p}}{p(2p-1)}\int_{z^\perp}
    \ell_{z,K}(y)^{1-2p}\ind_{\ell_{z,K}(y)\ge\frac1C}dy.
  \end{align*}
  The first term in the right-hand side can be bounded as follows
  \begin{multline*}
    C\int_{z^\perp}\ell_{z,K}(y)\ind_{\ell_{z,K}(y)<\frac1C}dy
    \le\int_{z^\perp}\ind_{\ell_{z,K}(y)<\frac1C}dy\\
    \textstyle
    =V_{d-1}\left(\left\{y\in \pr_{z^\perp}K : \ell_{z,K}(y)<\frac1C\right\}\right)
    \to0 \quad \text{as } C\to\infty.
  \end{multline*}
  The second term converges to $\frac{2}{2p-1}\|z\|_{\Pi^*K}=\|z\|_F$.
  For the third term,
  \begin{align*}
    C^{1-2p}&\int_{z^\perp}\ell_{z,K}(y)^{1-2p}\ind_{\ell_{z,K}(y)\ge\frac1C}dy\\
    &=C^{1-2p}\int_{z^\perp}\ell_{z,K}(y)^{1-2p}
    \ind_{\frac1C\le\ell_{z,K}(y)<\frac{1}{\sqrt C}}dy\\
    &\quad+C^{1-2p}\int_{z^\perp}\ell_{z,K}(y)^{1-2p}
    \ind_{\ell_{z,K}(y)\ge\frac{1}{\sqrt C}}dy\\
    &\le \int_{z^\perp}\ind_{\frac1C\le\ell_{z,K}(y)<\frac{1}{\sqrt C}}dy
    +C^{1/2-p}\int_{z^\perp}\ind_{\ell_{z,K}(y)\ge\frac{1}{\sqrt C}}dy\\
    &\le \int_{z^\perp}\ind_{\ell_{z,K}(y)<\frac{1}{\sqrt C}}dy
    +C^{1/2-p}\|z\|_{\Pi^*K}
    \to0
  \end{align*}
  as $C\to\infty$.  Thus, \eqref{eq:var-eta} holds if $\|z\|=1$.
  Since, for $t>0$,
  \[
  C^{1-2p}\,\E\eta_{C,p}(tz)^2
  =C^{1-2p}t^{2p}\,\E\eta_{C/t,p}(z)^2
  \to t\|z\|_F \quad\text{as }C\to\infty,
  \]
  \eqref{eq:var-eta} holds for arbitrary $z\in\R^d$.

  The convergence of characteristic functions can be verified
  similarly to the proof of Theorem~\ref{thr:conv1}.
\end{proof}

\section{Other constructions of fractional Poisson fields}
\label{sec:other-constr-fract}

The section describes other constructions of Poisson random fields
that share the same covariance structure with the MfBf.

\subsection{Introducing the directional component to the Poisson process}
\label{sec:intr-direct-comp}

Let $F$ be an $L_p$-ball with $p=2H$ for $H\in(0,\thf)$, and let
$\sigma$ be the corresponding spectral measure defined by
\eqref{eq:spectral}.  Consider a Poisson point process $N'_H$ on
$\R\times(0,\infty)\times\Sphere$ with the intensity measure
\begin{displaymath}
  \nu'_H(dx,dr,dv)=dx\, r^{-d-1+2H}dr\,\sigma(dv).
\end{displaymath}
Define the random field
\begin{displaymath}
  \zeta(z)=\int_{\R\times(0,\infty)\times\Sphere}
  \left(\ind_{\langle z,v\rangle\in[x-r,x+r]}-\ind_{0\in[x-r,x+r]}\right)
    \,N'_{H}(dx,dr,dv),
\end{displaymath}
$z\in\R^d$.  Let $\xi(y)$, $y\in\R$, be the univariate fractional
Poisson field with Hurst index $H$ and the shape parameter $K=[-1,1]$.
Then $\E\xi(y)^2=c_H|y|^{2H}$, where $c_H=\frac{2^{1-2H}}{H(1-2H)}$,
see \cite{bier:dem:est13}.  Therefore,
\begin{align*}
  \E\zeta(z)^2
  &=\int_{\R^d\times(0,\infty)\times\Sphere}
  \left(\ind_{\langle z,v\rangle\in[x-r,x+r]}-\ind_{0\in[x-r,x+r]}\right)^2
  \,\nu'_{H}(dx,dr,dv)\\
  &=\int_{\Sphere}\E\xi(\langle z,v\rangle)^2\,\sigma(dv)
  =c_H\int_{\Sphere}|\langle z,v\rangle|^{2H}\,\sigma(dv)
  =c_H\|z\|_F^{2H}.
\end{align*}
Hence,
the covariance function of $\zeta(z)$ is, up to a constant, the
covariance function of MfBf with the associated star body $F$.

\subsection{Poisson processes on Grassmannians}
\label{sec:poiss-proc-grassm}

The affine Grassmannian $A(d,q)$ in $\R^d$ is the family of
$q$-dimensional affine subspaces of $\R^d$, see
\cite[Sec.~13.2]{sch:weil08}. In particular, there is a unique
invariant normalised Haar measure $\mu_q$ on $A(d,q)$, see
\cite[Th.~13.2.12]{sch:weil08}. Let $N_{H,q}=\{(L_i,r_i)\}$ be the
Poisson process on $A(d,q)\times (0,\infty)$ with intensity being the
product measure of $\mu_q$ and the measure with the density
$r^{-d-1+q+2H}$ on $(0,\infty)$. Define
\begin{displaymath}
  \xi(z)=\int_{A(d,q)\times(0,\infty)}
  (\ind_{z\in r_iK+L_i}-\ind_{0\in r_iK+L_i})
  \,N_{H,q}(dL,dr)\,.
\end{displaymath}

By \cite[Eq.~(13.9)]{sch:weil08},
\begin{multline*}
  \int_{A(d,q)}\left|\ind_{z+r\check{K}\cap L\neq\emptyset}
  -\ind_{r\check{K}\cap L\neq\emptyset}\right|\mu_q(dL)\\
  =\int_{G(d,q)}\Vol_{d-q}((\pr_{L^\perp}(rK)+z_{L^\perp})\symdif\pr_{L^\perp}(rK))
  \nu_q(dL)\,,
\end{multline*}
where $\check{K}=\{-x : x\in K\}$, $\nu_q$ is the Haar probability
measure on the Grassmannian $G(d,q)$ (the family of all $q$-dimensional
linear subspaces in $\R^d$), $\pr_{L^\perp}$ denotes the projection on
the subspace $L^\perp$ orthogonal to $L$, and $z_{L^\perp}$ is the
projection of $z$ onto $L^\perp$. The integrand is bounded by a
constant times $\min(r^{d-q},r^{d-1-q})$, so that $\xi(z)$ is well
defined for $H\in(0,\thf)$.

The variance of $\xi(z)$ is given by
\[
  \E\xi(z)^2
  =\int_{G(d,q)} \int_0^\infty
  \Vol_{d-q}((\pr_{L^\perp}(K)+sz_{L^\perp})\symdif\pr_{L^\perp}(K))
  \nu_q(dL) s^{-1-2H} ds\,.
\]
Denote the right-hand side of \eqref{eq:frp} by $F_H(K)$.
Arguing as in the proof of Theorem~\ref{thr:prm}, we obtain that
\begin{align*}
  \E\xi(z)^2 &=\frac{1}{H(1-2H)}\int_{G(d,q)}
  \int_{z^\perp\cap L^\perp} \ell_{z_{L^\perp},\pr_{L^\perp}
    K}(y)^{-2H-1}dy\nu_q(dL)\\
  &=\frac{1}{H}\int_{G(d,q)} \int_{\pr_{L^\perp} K}
  \rho(x,z_{L^\perp})^{-2H} dx\nu_q(dL)\\
  &=\int_{G(d,q)} \|z_{L^\perp}\|^{2H}_{F_H(\pr_{L^\perp}
    K)}\nu_q(dL)\\
  &=\int_{G(d,q)} \|z\|_{F_H(\pr_{L^\perp} K)}\nu_q(dL)=\|z\|_{\tilde F}^{2H}\,.
\end{align*}
The penultimate equation follows from the fact that $F_H(\pr_{L^\perp}
K)$ is given by the sum of $L$ and a subset of $L^\perp$.  The
associated star body $\tilde{F}$ is obtained as the limit of the
$p$-sums (with $p=2H$) of scaled radial $p$th mean bodies of the
projections of $K$. Such MfBf can be viewed as the integral
\begin{displaymath}
  \int_{G(d,q)}\eta_L(z_{L^\perp})\nu_q(dL)
\end{displaymath}
of the independent MfBf's on $L^\perp$ indexed by $L\in G(d,q)$, each
having the associated star body $F_H(\pr_{L^\perp} K)$ being the
scaled radial $p$th mean body of $\pr_{L^\perp} K$.

\section*{Acknowledgements}

IM was supported in part by Swiss National Science Foundation grant
200021-137527.  KR was supported by the Swiss Government Excellence
Scholarship. The authors are grateful to Richard Gardner for advise on
the radial $p$th mean bodies and to Nikolay Leonenko for comments on
the draft version. The comments of the referee have led to numerous
improvements to the presentation. 

\bibliographystyle{model1b-num-names}

\end{document}